\newlength{\hatchspread}
\newlength{\hatchthickness}
\newlength{\hatchshift}
\newcommand{\hatchcolor}{}
\tikzset{hatchspread/.code={\setlength{\hatchspread}{#1}},
         hatchthickness/.code={\setlength{\hatchthickness}{#1}},
         hatchshift/.code={\setlength{\hatchshift}{#1}},
         hatchcolor/.code={\renewcommand{\hatchcolor}{#1}}}
\tikzset{hatchspread=3pt,
         hatchthickness=0.4pt,
         hatchshift=0pt,
         hatchcolor=black}
\newcommand*{\centerfloat}{%
  \parindent \z@
  \leftskip \z@ \@plus 1fil \@minus \textwidth
  \rightskip\leftskip
  \parfillskip \z@skip}
\newcommand{\Alpha}{\mathrm{A}}
\newcommand{\Beta}{\mathrm{B}}
\NewDocumentCommand{\makeabbrev}{mmm}
 {
  \yoruk_makeabbrev:nnn { #1 } { #2 } { #3 }
 }
\makeabbrev{\textbf}{tbf#1}{a,b,c,d,e,f,g,h,i,j,k,l,m,n,o,p,q,r,s,t,u,v,w,x,y,z,A,B,C,D,E,F,G,H,I,J,K,L,M,N,O,P,Q,R,S,T,U,V,W,X,Y,Z}
\makeabbrev{\textbf}{bf#1}{a,b,c,d,e,f,g,h,i,j,k,l,m,n,o,p,q,r,s,t,u,v,w,x,y,z,A,B,C,D,E,F,G,H,I,J,K,L,M,N,O,P,Q,R,S,T,U,V,W,X,Y,Z}
\makeabbrev{\textsf}{tsf#1}{a,b,c,d,e,f,g,h,i,j,k,l,m,n,o,p,q,r,s,t,u,v,w,x,y,z,A,B,C,D,E,F,G,H,I,J,K,L,M,N,O,P,Q,R,S,T,U,V,W,X,Y,Z}
\makeabbrev{\mathsf}{mss#1}{a,b,c,d,e,f,g,h,i,j,k,l,m,n,o,p,q,r,s,t,u,v,w,x,y,z,A,B,C,D,E,F,G,H,I,J,K,L,M,N,O,P,Q,R,S,T,U,V,W,X,Y,Z}
\makeabbrev{\mathfrak}{mf#1}{a,b,c,d,e,f,g,h,i,j,k,l,m,n,o,p,q,r,s,t,u,v,w,x,y,z,A,B,C,D,E,F,G,H,I,J,K,L,M,N,O,P,Q,R,S,T,U,V,W,X,Y,Z}
\makeabbrev{\mathrm}{mrm#1}{a,b,c,d,e,f,g,h,i,j,k,l,m,n,o,p,q,r,s,t,u,v,w,x,y,z,A,B,C,D,E,F,G,H,I,J,K,L,M,N,O,P,Q,R,S,T,U,V,W,X,Y,Z}
\makeabbrev{\mathbf}{mbf#1}{a,b,c,d,e,f,g,h,i,j,k,l,m,n,o,p,q,r,s,t,u,v,w,x,y,z,A,B,C,D,E,F,G,H,I,J,K,L,M,N,O,P,Q,R,S,T,U,V,W,X,Y,Z}
\makeabbrev{\mathcal}{mc#1}{A,B,C,D,E,F,G,H,I,J,K,L,M,N,O,P,Q,R,S,T,U,V,W,X,Y,Z}
\makeabbrev{\mathbb}{mbb#1}{A,B,C,D,E,F,G,H,I,J,K,L,M,N,O,P,Q,R,S,T,U,V,W,X,Y,Z}
\makeabbrev{\mathscr}{ms#1}{A,B,C,D,E,F,G,H,I,J,K,L,M,N,O,P,Q,R,S,T,U,V,W,X,Y,Z}
\makeabbrev{\mathrm}{#1}{
Id,id,ran,rk,diag,stab,ann,conv,pr,ev,tr,End,Hom,sgn,im,op,can,fin,ext,red,tot,
%
rot,usc,lsc,Lip,LocLip,lip,bSymLip,osc,AC,loc,uloc,spec,coz,z,ul,
%
supp,Opt,Adm,Cpl,Geo,GeoSel,GeoOpt,GeoAdm,GeoCpl,reg,
%
bd,co,Ric,Exp,dExp,dist,seg,Seg,cut,fcut,Cut,SDiff,Iso,Isom,diam,cl,Homeo,Diff,Der,vol,dvol,inj,relint, Graph, sub,codim,
%
var,law,Var,Poi,Gam,pa,so,iso,fs,inv,pqi,mix,
TestF,
}
\makeabbrev{\mathsf}{#1}{DP,CD,BE,MCP,Ent,wMTW,MTW,RCD,ncRCD,QCD,EVI,Irr,IH,SC,wFe,VA,UP,Curv,Alex,CAT}
\newcommand{\bLip}{\mathrm{Lip}_b}
\newcommand{\T}{\tau} 
\newcommand{\A}{\Sigma} 
\newcommand{\Bo}[1]{\msB_{#1}} 
\newcommand{\Bdd}[1]{\msO_{#1}}
\newcommand{\Ed}{{\msE_\mssd}}  
\renewcommand{\complement}{\mathrm{c}}
\newcommand{\emparg}{{\,\cdot\,}}
\newcommand{\as}[1]{\quad #1\text{-a.e.}}
\newcommand{\dom}[1]{\msD(#1)}
\newcommand{\sem}[1]{\{#1\}_{t > 0}}
\newcommand{\reso}[1]{\{#1\}_{\alpha > 0}}
\newcommand{\Lipu}{\mathrm{Lip}^1}
\DeclareMathOperator{\eqdef}{\coloneqq}
\let\epsilon\varepsilon
\let\temp\phi
\let\phi\varphi
\let\varphi\temp
\newcommand{\rar}{\rightarrow}
\newcommand{\diff}{\mathop{}\!\mathrm{d}}
\newcommand{\tabs}[1]{\big\lvert#1\big\rvert}	
\newcommand{\norm}[1]{\left\lVert#1\right\rVert}					
\newcommand{\set}[1]{\left\{#1\right\}}							
\newcommand{\paren}[1]{\left(#1\right)}							
\newcommand{\tparen}[1]{\big({#1}\big)}
\newcommand{\tonde}[1]{\left(#1\right)}	
\newcommand{\ttonde}[1]{\big({#1}\big)}
\newcommand{\quadre}[1]{\left[#1\right]}							
\newcommand{\class}[2][]{\left[#2\right]_{#1}}						
\newcommand{\tclass}[2][]{\big [#2\big]_{#1}}						
\newcommand{\ttclass}[2][]{[#2]_{#1}}							
\newcommand{\tym}[1]{{\scriptscriptstyle{\times #1}}}
\newcommand{\otym}[1]{{\scriptscriptstyle{\otimes #1}}}
\newcommand{\osym}[1]{{\scriptscriptstyle{\odot #1}}}
\DeclareSymbolFont{symbolsC}{U}{pxsyc}{m}{n}
\DeclareMathSymbol{\medcirc}{\mathbin}{symbolsC}{7}
\DeclareSymbolFont{symbolsZ}{OMS}{pxsy}{m}{n}
\newcommand{\seq}[1]{\paren{#1}}								
\newcommand{\tseq}[1]{{\big(#1\big)}}
\newcommand{\Cb}{\mcC_b}									
\newcommand{\Meas}{\mathscr M}
\newcommand{\pfwd}{\sharp}
\DeclareMathOperator*{\esssup}{esssup}
\DeclareMathOperator*{\essinf}{essinf}
\DeclareMathOperator{\car}{\mathds 1}
\DeclareMathOperator{\emp}{\varnothing} 
\newcommand{\N}{{\mathbb N}}
\newcommand{\R}{{\mathbb R}}
\newcommand{\TLDS}{\textsc{tlds}\xspace}
\newcommand{\parEMLDS}{\textsc{mlds}\xspace}
\newcommand{\quot}{{\sf P}}
\newcommand{\mrestr}[1]{\!\downharpoonright_{#1}}
\newcommand{\trid}{{\star}}
\tikzset{cross/.style={cross out, draw=black, minimum size=2*(#1-\pgflinewidth), inner sep=0pt, outer sep=0pt},
cross/.default={4pt}}
\newcommand{\iref}[1]{\ref{#1}}
\newcommand{\comma}{\,\,\mathrm{,}\;\,}
\newcommand{\semicolon}{\,\,\mathrm{;}\;\,}
\newcommand{\fstop}{\,\,\mathrm{.}}
\newcommand{\cdc}{\Gamma}
\newcommand{\LL}[2]{\mcL^{#1, #2}}
\newcommand{\TT}[2]{\mcT^{#1, #2}}
\newcommand{\EE}[2]{\mcE^{#1, #2}}
\newcommand{\SF}[2]{\cdc^{#1, #2}}
\newcommand{\Cyl}[1]{\mcF^\dUpsilon\mcC^\infty_b(#1)}
\newcommand{\CylQP}[2]{\mcF^\dUpsilon\mcC^{\infty}_b(#2)_{#1}}
\newcommand{\Dz}{\mcD} 
\newcommand{\PP}{{\pi}}
\newcommand{\cpl}{q}
\newcommand{\QP}{{\mu}}
\newcommand{\coK}[2]{\co_{#2}({#1})}
\newcommand{\EN}{\overline{\N}}
\newcommand{\dUpsilon}{{\boldsymbol\Upsilon}}
\newcommand{\proj}{{\sf proj}}
\newcommand{\U}{\dUpsilon}
\newcommand{\E}{\mathcal E}
\newcommand{\F}{\mathcal F}
\renewcommand{\1}{\mathbf 1}
\newcommand{\CYL}{\mcF^\dUpsilon\mcC^\infty_b}
\newcommand{\comm}{\,\,\mathrm{,}\;\,}
\numberwithin{equation}{section}
\theoremstyle{plain}	
\newtheorem{thm}{Theorem}[section]
\newtheorem*{thm*}{Theorem}
\newtheorem*{mthm*}{Main Theorem}
\newtheorem{theorem}{Theorem}
\newtheorem{corollary}{Corollary}
\newtheorem{prop}[thm]{Proposition}
\newtheorem{lem}[thm]{Lemma}
\newtheorem{cor}[thm]{Corollary}
\newtheorem*{cor*}{Corollary}
\theoremstyle{definition}
\newtheorem{defs}[thm]{Definition}
\newtheorem*{defs*}{Definition}
\theoremstyle{remark}
\newtheorem{rem}[thm]{\bf Remark}
\newtheorem{ese}[thm]{\bf Example}
\newtheorem{ass}[thm]{\bf Assumption}
\newtheorem*{asm*}{{\bf List of Assumptions}}
\renewcommand{\paragraph}[1]{\medskip\emph{#1}.\quad}
\renewcommand{\#}{\sharp}
\begin{document}
\title[On the Ergodicity of Interacting Particle Systems]{On the Ergodicity of Interacting Particle Systems under Number Rigidity}

\author[K.~Suzuki]{Kohei Suzuki}
\address{Department of Mathematical Science, Durham University, Science Laboratories, South Road, DH1 3LE, United Kingdom}
\email{kohei.suzuki@durham.ac.uk}
\thanks{\hspace{-5.5mm}  Department of Mathematical Science, Durham University, South Road, DH1 3LE, United Kingdom
\\
\hspace{2.0mm} E-mail: kohei.suzuki@durham.ac.uk
\vspace{1mm} 
\\
The author gratefully acknowledges funding by the Alexander von Humboldt Stiftung.}

\keywords{\vspace{2mm}Ergodicity, tail triviality, optimal transport, number rigidity}

\subjclass[2020]{37A30, 31C25, 30L99, 70F45, 60G55}

\begin{abstract} 
In this paper, we provide relations among the following properties:
\begin{enumerate}[(a)]
\item the tail triviality of a probability measure $\mu$ on the configuration space $\dUpsilon$; 
\item the finiteness of the $L^2$-transportation-type distance $\bar{\mssd}_\U$;
\item the irreducibility of $\QP$-symmetric Dirichlet forms on $\dUpsilon$.
\end{enumerate}
 As an application, we obtain the ergodicity (i.e., the convergence to the equilibrium) of interacting infinite diffusions having logarithmic interaction arisen from determinantal/permanental point processes including $\mathrm{sine}_{2}$, $\mathrm{Airy}_{2}$, $\mathrm{Bessel}_{\alpha, 2}$ ($\alpha \ge 1$), and $\mathrm{Ginibre}$ point processes, in particular, the case of unlabelled Dyson Brownian motion is covered. For the proof, the number rigidity of point processes in the sense of Ghosh--Peres plays a key role. 
\end{abstract}

\maketitle

\section{Introduction}
The ergodicity (i.e., the convergence to the equilibrium)  of interacting particle systems is one of the significant hypothesis supporting the foundation of statistical physics. In this paper, we study the ergodicity  in terms of the optimal transportation theory and of the theory of point processes. 


\paragraph{Configuration spaces}
The configuration space $\dUpsilon=\U(X)$ over~a locally compact Polish space~$X$ is the set of all locally finite point measures on $X$:
\begin{equation*} 
\dUpsilon(X)\eqdef \set{\gamma=\sum_{i=1}^N \delta_{x_i}: x_i\in X\comma N \in \N_0\cup \set{+\infty}\comma \gamma(K)<\infty \quad K \Subset X }\fstop
\end{equation*}
The space~$\dUpsilon$ is endowed with the \emph{vague topology}~$\T_\mrmv$ by the duality of compactly supported continuous functions on~$X$, and with a Borel probability measure~$\QP$,  understood as the law of a proper point process on~$X$. 

\paragraph{Interacting diffusions}A system of interacting {\it many} diffusions on the base space $X$ can be thought of as a {\it single} diffusion on $\dUpsilon$, provided the system does not condense too much by itself in the sense that every compact set in $X$ contains only finitely many particles throughout the time evolutions. 
There have been a large number of studies on a diffusion in~$\dUpsilon$, in particular, on a  system of infinite stochastic differential equations on~$\R^n$, written `formally' as
\begin{align} \label{1}
\diff X_t^k= - \frac{\beta}{2}\nabla \Phi(X_t^k)\diff t -  \frac{\beta}{2} \sum_{i \neq k} \nabla \Psi(X_t^k, X_t^i) \diff t + \diff B^k_t, \quad k \in \N \comma  
\end{align}
 whereby $\Phi$ is a free potential,  $\Psi$ is an interaction potential between particles, $\beta>0$ is a constant called {\it inverse temperature},  and $\seq{B^k}_{k \in \N}$ are independent Brownian motions on~$\R^n$. One approach addressing a solution to \eqref{1} is to construct a $\QP$-symmetric Dirichlet form~$\ttonde{\EE{\dUpsilon}{\QP}, \mathcal F^{\U, \QP}}$ on $L^2(\U, \QP)$, where $\QP$ is a (quasi-) Gibbs measure corresponding to the potentials $\Phi$ and $\Psi$, see, e.g., \cite{AlbKonRoe98b, Yos96} for Ruelle class potentials; \cite{Spo87, Osa96, Yoo05, Osa13, OsaTan14, HonOsa15, LzDSSuz21, Suz22b} for more general interactions including logarithmic potentials. The other approaches to tackle~\eqref{1} have been also studied such as the construction of time correlation functions in~\cite{Dys62, NagFor98, KatTan10}; the construction of the unique strong solution to  \eqref{1} in the case of the Dyson models in~\cite{Tsa16}.  We refer the readers to \cite{Roe09, Osa19} and also \cite[\S 1.6]{LzDSSuz21} for more complete accounts and references. 

\paragraph{Ergodicity}{The convergence to the equilibrium measure $\QP$} regarding solutions to \eqref{1} is characterised as {\it the ergodicity} of  the $L^2(\QP)$-semigroup $\{S^{\dUpsilon, \QP}_t\}$ corresponding to $\ttonde{\EE{\dUpsilon}{\QP}, \mathcal F^{\U, \QP}}$, which is defined as
\begin{align*} 
\int_{\dUpsilon} \biggl( S^{\dUpsilon, \QP}_t u - \int_{\dUpsilon} u \diff \QP \biggr)^2 \diff \QP \xrightarrow{t \to \infty} 0, \quad u \in L^2(\QP) \fstop
\end{align*}
An equivalent characterisation  is  the {\it irreducibility} of $\ttonde{\EE{\dUpsilon}{\QP}, \mathcal F^{\U, \QP}}$, i.e.,  
$$\EE{\dUpsilon}{\QP}(u)=0 \quad \implies  \quad u \equiv {\rm const.} \quad \text{$\QP$-a.e.} \fstop$$ 
Up to now, there were only few known examples, where one could show the ergodicity of $\{S^{\dUpsilon, \QP}_t\}$ in the case of infinite particle diffusions: one is a a class of Ruelle-type Gibbs measures with a {\it compactly supported interaction potential} and a {\it small activity constant~$z$} (\cite[Cor.\ 6.2]{AlbKonRoe98b}); the other is a {\it labelled} particle system corresponding to the $\mathrm{sine}_2$ process, which has been recently addressed in \cite{OsaTub21} by relying upon the arguments of strong solutions to~\eqref{1}  developed  in~\cite{OsaTan20}.


\paragraph{Optimal transport theory on $\dUpsilon$}
If the base space~$X$ is equipped with a metric~$\mssd$, the configuration space~$\dUpsilon$ is equipped with the \emph{$L^2$-transportation} (called also: \emph{$L^2$-Wasserstein}, or $L^2$-{\it Monge--Kantorovich--Rubinstein}) {distance}
\begin{align*}
\mssd_\dUpsilon(\gamma, \eta) \eqdef \inf \tonde{\int_{X^\tym{2}} \mssd^2(x,y)\diff\cpl(x,y)}^{1/2}\comma 
\end{align*}
where the infimum is taken over all measures~$\cpl$ on~$X^\tym{2}$ with marginals~$\gamma$ and~$\eta$.
As opposed to the case of the space of probability measures having finite second moment (i.e., the $L^2$-Wasserstein space), the function $\mssd_{\dUpsilon}$ cannot be a distance function because $\mssd_\dUpsilon$ may attain the value~$+\infty$ (e.g., when the total masses of $\gamma$ and $\eta$ are different, or the tails of $\gamma$ and $\eta$ are not close enough), and this {\it often} happens in the sense that this occurs on sets of positive measure for any reasonable choice of a reference measure on~$\dUpsilon$. It is, therefore, called \emph{extended} distance. In this article, we use a variant of $\mssd_\U$ defined as 
\begin{align*}
\bar{\mssd}_\U(\gamma, \eta):=
\begin{cases}
\mssd_\U(\gamma, \eta) \quad &\text{if $\gamma_{E^c}=\eta_{E^c}$ for some bounded set~$E$\ ,}
\\
+\infty \quad  &\text{otherwise} \fstop
\end{cases}
\end{align*}
Recent studies have revealed that the $L^2$-transportation distance is the right object to describe geometry, analysis and stochastic analysis in~$\U$ such as the curvature bounds on~$\dUpsilon$~(\cite{ErbHue15, LzDSSuz22, Suz22b}), the consistency between metric measure geometry and Dirichlet forms (\cite{RoeSch99, LzDSSuz21}, characterisations of BV functions and sets of finite perimeters on~$\dUpsilon$ (\cite{BruSuz21}) and the integral Varadhan short-time asymptotic~(\cite{Zha01, LzDSSuz22}).

\paragraph{Theory of point processes}A probability measure $\mu$ on $\dUpsilon$ is said to be {\it tail trivial} \ref{ass:T} if (see Dfn.~\ref{defn: TT}) 
$$\text{$\mu(A) \in \{0, 1\}\quad $ whenever $\quad A$ is a set in the tail $\sigma$-algebra} \fstop$$
The tail triviality has been originally discussed in relation to {\it phase transition} of Gibbs states (i.e., non-uniqueness of Gibbs measures with a given potential) and  it is equivalent to the {\it extremality} in the convex set of Gibbs measures with a given potential (see \cite[Cor.\ 7.4]{Geo11}). The tail triviality has been extended also to determinantal/permanental point processes by \cite{Lyo03} and \cite{ShiTak03b} independently. Since then, it has been further developed for a wider class of determinantal/permantental processes both in the continuous and discrete settings by various studies, see Example~\ref{exa: TT}. 
A probability measure $\QP$ on~$\dUpsilon$ is said to be {\it number rigid} (Assumption~\ref{ass:Rig}) if the following holds $\QP$-almost surely for every bounded Borel set $E$:
$$\gamma\mrestr{E^c} = \eta\mrestr{E^c} \quad \implies \quad \gamma(E) = \eta(E)\fstop$$
Namely, if two configuration $\gamma$ and $\eta$ coincide outside $E$, then the numbers of particles inside $E$ for $\gamma$ and $\eta$ coincide. 
The study of this remarkable spatial correlation phenomenon has been initiated by \cite{Gho12,  Gho15, GhoPer17} for $\mathrm{sine}_2$, $\mathrm{Ginibre}$ and $\mathrm{GAF}$ point processes and it has been further developed for other point processes, see Example~\ref{exa: R} for further references. 

\paragraph{Setting}In this article, we work in the following setting. 
Let $X=\R^n$ be the $n$-dimensional Euclidean space and $\mssd$ be the Euclidean distance on $\R^n$. Let $\seq{B_r}_{r \in \N}$ be a monotone increasing sequence of convex compact domains covering~$\R^n$ and $\mssm_r$ be the Lebesgue measure restricted on $B_r$. For  $E \subset \R^n$, define the projection $\pr_{E}: \U \ni \gamma \mapsto \gamma_{E}:=\gamma\mrestr{E}$ by the restriction of $\gamma$ on $E$. For a Borel probability measure $\QP$ on $\U$, define $\QP(\cdot\ |\ \cdot_{B_r^c}=\eta_{B_r^c})$ to be the regular conditional probability measure with respect to the $\sigma$-algebra $\sigma(\pr_{B_r^c})$ conditioned to be $\eta \in \U$. Define the push-forwarded measure and its restriction on $\U^k(B_r):=\{\gamma \in \U(B_r): \gamma(B_r)=k\}$ by
$$\QP_r^\eta:=(\pr_{B_r})_\#\QP(\cdot\ |\ \cdot_{B_r^c}=\eta_{B_r^c}) \comma \quad  \QP_r^{\eta, k}:=\QP_r^\eta\mrestr{{\U^k(B_r)}}\fstop$$ 
We denote by $\pi_{\mssm_r}$ the Poisson measure on $\U(B_r)$ with intensity $\mssm_r$ and by $\pi_{\mssm_r}^k$ the restriction on $\U^k(B_r)$. 
Let $\cdc^{\U(B_r)}$ be the square field on $\U(B_r)$ defined as 
$$\cdc^{\U(B_r)}(u):=\sum_{k=0}^\infty\cdc^{\U^k(B_r)}(u):=\sum_{k=0}^\infty\Bigl|\nabla^{\odot k}u|_{\U^k(B_r)}\Bigr|^2 \comma
$$
where $\nabla^{\odot k}$ is the symmetric product of the gradient operator~$\nabla$ on $\R^n$.
\smallskip
\begin{asm*}
We say that $\QP$ satisfies 
\begin{itemize} 
\item  {\it strong conditional absolute continuity} \ref{ass:SCE} if 
$$\QP_r^{\eta, k} \sim \pi_{\mssm_r}^k \comma \quad k \in  \mathcal K_r^\eta:=\{k \in \N_0: \QP_r^\eta(\U^k(B_r))>0\}  \quad r \in \N \quad \text{$\QP$-a.e.~$\eta$}\ ;$$
\item  {\it conditional closability} \ref{ass:ConditionalClos} if the form 
 $$\EE{\dUpsilon(B_r)}{\QP^{\eta}_{r}}(u) = \int_{\dUpsilon(B_r)} \cdc^{\dUpsilon(B_r)} (u) \diff\QP^{\eta}_{r}$$ is $L^2(\QP_r^\eta)$-closable on a certain core (see Dfn.~\ref{ass:ConditionalClosability})  for $\QP$-a.e.\ $\eta$ and every $r \in \N$. We denote its closure by $\dom{\EE{\dUpsilon(B_r)}{\QP^{\eta}_{r}}}$;
 \item {\it conditional irreducibility} \ref{ass:ConditionalErg} if 
 $$\E^{\U, \QP_r^\eta}(u) =0 \comma \quad u \in \dom{\EE{\dUpsilon(B_r)}{\QP^{\eta}_{r}}}\quad \implies  \quad u|_{\U^k(B_r)} \equiv C_r^{\eta, k} \quad \QP_r^{\eta,k} \text{-a.e.} \comma$$
 for $\QP$-a.e.~$\eta$, $r \in \N$, $k \in \mathcal K_r^\eta$, where $C_r^{\eta, k}$ is a constant depending on $r, \eta, k$. 
 \end{itemize}
  Under \ref{ass:SCE} and \ref{ass:ConditionalClos}, we construct a Dirichlet form~$\ttonde{\EE{\dUpsilon}{\QP},\dom{\EE{\dUpsilon}{\QP}}}$ in Prop.~\ref{p:DF}. Let $\mathcal F^{\U, \QP} \subset \dom{\EE{\dUpsilon}{\QP}}$ be any closed subspace satisfying the Markovian property~(see \eqref{e:Markov}), called {\it Markovian} subspace. 
We say that the form $(\E^{\U, \QP}, \mathcal F^{\U, \QP})$ satisfies
 \begin{itemize}  
 \item {\it Rademacher-type property} \ref{p:Rad} if 
\begin{align*}
\Lip_b(\bar{\mssd}_\U, \QP) \subset \mathcal F^{\U, \QP}\comma \quad \cdc^\U(u) \le \Lip_{\bar{\mssd}_\U}(u)^2 \comma
\end{align*}
where $\Lip_b(\bar{\mssd}_\U, \QP)$ is the space of bounded $\bar{\mssd}_\U$-Lipschitz $\QP$-measurable functions on~$\U$;
\item  {\it quasi-regularity} \ref{ass:QR} if 
$$\text{$(\E^{\U, \QP}, \mathcal F^{\U, \QP})$ is quasi-regular in $(\U, \tau_\mrmv)$}\comma$$
see \S\ref{subsec:D} for the definition of the quasi-regularity. 
\end{itemize}

 \end{asm*}
\paragraph{Main result}We define the following function associated with the $L^2$-transportation-type  distance~$\bar{\mssd}_{\dUpsilon}$:
$$\bar{\mssd}^\QP_\dUpsilon(\Xi, \Lambda):=\QP\text{-}\essinf_{\gamma \in \Xi}\inf_{\eta \in \Lambda}\bar{\mssd}_\dUpsilon(\gamma, \eta) \quad \Xi, \Lambda \subset \dUpsilon \fstop$$
We now state the main theorem, where we provide relations among the following three properties:
\begin{enumerate}[$(a)$]
\item \label{c:TT} $\mu$ is tail trivial~\ref{ass:T};
 \item \label{c:FD} $\bar{\mssd}_{\dUpsilon}^\QP(A, B)<\infty$ whenever $A$ is $\QP$-measurable,  $B$ is Borel and $\mu(A) \mu(B)>0$;
\item \label{c:IR}  $(\EE{\dUpsilon}{\QP}, \mathcal F^{\U, \QP})$ is irreducible.
\end{enumerate}
\begin{theorem}[Thm.~\ref{thm: Equiv}] \label{t:mthmI}
Let~$\QP$ be a Borel probability measure on~$\U$. 
Then,  
\begin{itemize}
\item \ref{c:FD}  $\implies$ \ref{c:TT};
\item if \ref{ass:Rig} holds,  then \ref{c:TT} $\implies$ \ref{c:FD}.
\end{itemize}
Suppose that $\QP$ satisfies~\ref{ass:SCE} and~\ref{ass:ConditionalClos},  and $\mathcal F^{\U, \QP} \subset  \dom{\EE{\dUpsilon}{\QP}}$ is any closed Markovian subspace. Then the following hold.
\begin{itemize}
\item if  \ref{ass:ConditionalErg}, \ref{ass:QR}  and \ref{ass:Rig} hold,  then \ref{c:FD} $\implies$ \ref{c:IR};
\item if \ref{p:Rad} holds,  then \ref{c:IR} $\implies$ \ref{c:FD}.
\end{itemize}
\end{theorem}

We therefore have the following relation between the tail triviality and the irreducibility.
\begin{corollary}[Tail triviality \& Irreducibility, {Cor.~\ref{cor: Equiv}}] \label{c:COI}
Let~$\QP$ be a Borel probability measure on~$\U$ satisfying \ref{ass:SCE}, ~\ref{ass:ConditionalClos}, and let $\mathcal F^{\U, \QP} \subset  \dom{\EE{\dUpsilon}{\QP}}$ be any closed Markovian subspace. Then the following hold.
\begin{itemize}
\item If  \ref{ass:ConditionalErg}, \ref{ass:QR}  and \ref{ass:Rig}  hold, then 
$$ \text{$\QP$ is tail trivial} \quad \implies \quad \text{$(\EE{\dUpsilon}{\QP}, \mathcal F^{\U, \QP})$ is irreducible} \;$$
\item If \ref{p:Rad} holds for $\mathcal F^{\U, \QP}$,
$$\quad \text{$(\EE{\dUpsilon}{\QP}, \mathcal F^{\U, \QP})$ is irreducible}\quad \implies   \quad  \text{$\QP$ is tail trivial} \fstop$$
\end{itemize}
\end{corollary}

 \paragraph{Applications}The first application of Theorem~\ref{t:mthmI} as well as Corollary~\ref{c:COI} is to considerably enlarge the list of (long-range) interactions for which one can prove the ergodicity of infinite particle systems. As an illustration, we will prove in \S \ref{sec: Exa} that $(\EE{\dUpsilon}{\QP}, \mathcal F^{\U, \QP})$ is irreducible (i.e., $\{S^{\dUpsilon, \QP}_t\}$ is ergodic) for all the measures~$\QP$ belonging to~$\mathrm{sine}_{2}$, $\mathrm{Airy}_{2}$, $\mathrm{Bessel}_{\alpha, 2}$ ($\alpha \ge 1$), and $\mathrm{Ginibre}$ point processes. In particular, the semigroup $\{S^{\dUpsilon, \QP}_t\}$ associated with {\it unlabelled Dyson Brownian motion} is covered. 
 


The second application is to show the finiteness of the $L^2$-transportation distance $\mssd_{\dUpsilon}(A, B)$ as well as $\bar{\mssd}_{\dUpsilon}(A, B)$ between sets~$A, B \subset \dUpsilon$. As both $\mssd_{\dUpsilon}$ and $\bar{\mssd}_{\dUpsilon}$ take value $+\infty$ on sets of positive measure, it is not straightforward to answer the following geometric question: 
\begin{align*}\tag*{{\sf (Q)}}
\text{when do $\mssd_{\dUpsilon}(A, B)$ and $\bar{\mssd}_{\dUpsilon}(A, B)$ return a finite value?}
\end{align*}
Theorem~\ref{t:mthmI} tells us the finiteness of~$\bar{\mssd}_{\dUpsilon}(A, B)$ (thus, also the finiteness of ${\mssd}_{\dUpsilon}(A, B)$ as $\mssd_\U \le \bar{\mssd}_{\dUpsilon}$ by definition) only by checking the positivity of measures~$\mu(A)\mu(B)>0$, due to the tail triviality \ref{ass:T} and the number rigidity \ref{ass:Rig} of $\QP$.



\paragraph{Comparisons with \cite{AlbKonRoe98b}}For a class of Gibbs measures or measures satisfying a certain integration-by-parts formula (denoted by {\rm (IbP1) and (IbP2)} in \cite[Thm.\ 6.2, 6.5]{AlbKonRoe98b}), relations between the ergodicity and the extremality of these measures have been studied. We compare our result with theirs in the following three points:
\begin{itemize}
\item {\bf Choice of a core}. They studied Dirichlet forms whose core is {\it cylinder functions} while our Dirichlet forms have a flexibility for the choice of a core, which for instance allows not only cylinder functions, but also local functions as well as Lipschitz functions. This broadens the score of applications significantly as cores of Dirichlet forms corresponding to long-range interactions constructed so far (e.g., \cite{Spo87, Osa96, Osa13, OsaTan14, HonOsa15, LzDSSuz21, Suz22b}) are covered by our setting, but not necessarily covered by the setting of cylinder functions.
\item {\bf Extremality vs.~ Tail-triviality}. They proved that the extremality of a class of Gibbs measures implies the ergodicity. The concept of the extremality is equivalent to the tail triviality when Gibbs measures  are considered (see~\cite[Cor.\ 7.4]{Geo11}).  However, the extremality is not necessarily defined  beyond Gibbs measures nor beyond measures satisfying {\rm (IbP1) and (IbP2)}, and many point processes coming from random matrix theory  are not always described as Gibbs measures nor {\rm (IbP1) and (IbP2)}, rather they are described by determinantal or permanental structures or by a scaling limit of eigenvalue distributions of random matrices.  
 In contrast, the tail triviality is a concept that can be defined for arbitrary point processes, because of which Theorem~\ref{t:mthmI} can be applied also to the latter cases.
\item {\bf Maximal domain vs.~Rademacher-type property}. They proved that the irreducibility of the maximal Dirichlet form implies the extremality of Gibbs measures, which corresponds to \ref{c:IR} $\implies$ \ref{c:TT} in Thereom~\ref{t:mthmI}. We however only assume the Rademacher-type property \ref{p:Rad} of our Dirichlet form, whose domain is in general smaller than the maximal form. As the irreducibility of a larger domain is a stronger statement,  Theorem~\ref{t:mthmI} proved the extremality of Gibbs measures (as well as the tail triviality of general measures) under a weaker assumption.  
 \end{itemize}
 \paragraph{Geometry and statistical physcis}We would like to draw the reader's attention that the relations between \ref{c:FD} and \ref{c:IR} in Theorem~\ref{t:mthmI} provides a relation between the ergodicity of interacting diffusion processes and a quantitative information of the optimal transport distance, where
the ergodicity is a {\it statistical-physical concept}, while the finiteness of the $L^2$-transportation distance between $\QP$-positive sets  is a {\it purely geometric concept} of the  extended metric measure space~$(\dUpsilon, \bar{\mssd}_{\dUpsilon}, \QP)$. 

\smallskip
We close this introduction by providing an outlook on further improvements. The number rigidity~\ref{ass:Rig} requires a strong spatial correlation to $\QP$, which is, however,  not a necessary condition for the ergodicity. Indeed, \cite[Thm.\ 4.3]{AlbKonRoe98} proved the ergodicity for the Poisson measures, which obviously do not posses the number rigidity~\ref{ass:Rig} since the laws of the Poisson point processes inside and outside bounded sets are independent. A challenging question is whether we can prove the ergodicity of Dirichlet forms for general tail trivial invariant measures without \ref{ass:Rig}. 

\paragraph{Organisation of the paper}In \S\ref{sec: Pre}, we introduce necessary concepts and recall results used for the arguments in later sections. In \S\ref{sec:CDF}, we construct Dirichlet forms on $\U$. In \S\ref{sec: Irr}, we prove the main results. 
 In \S\ref{sec: Ver}, we give sufficient conditions to verify the main assumptions of Theorem~\ref{t:mthmI}.  In~\S\ref{sec: Exa}, we confirm that Theorem~\ref{t:mthmI} can be applied to $\mathrm{sine}_{2}$, $\mathrm{Airy}_{2}$, $\mathrm{Bessel}_{\alpha, 2}$ ($\alpha \ge 1$), and $\mathrm{Ginibre}$ point processes.

\paragraph{Data Availability Statement}
Data sharing not applicable to this article as no datasets were generated or analysed during the current study.
\section{Preliminaries} \label{sec: Pre}

\subsection{Numbers, Tensors, Function Spaces}
We write $\N:=\{1, 2, 3, \ldots\}$, $\N_0=\{0, 1, 2, \ldots \}$, $\EN:=\N \cup \{+\infty\}$ and $\EN_0:=\N_0 \cup\{+\infty\}$. 
The uppercase letter $N$ is used for  $N \in \EN_0$, while the lowercase letter $n$ is used for $n \in \N_0$. 
We shall adhere to the following conventions:
\begin{itemize}
\item the superscript~${\square}^\tym{N}$ (the subscript~$\square_\tym{N}$) denotes $N$-fold \emph{product objects};

\item the superscript~${\square}^\otym{N}$ (the subscript~$\square_\otym{N}$) denotes $N$-fold \emph{tensor objects};

\item the superscript~${\square}^\osym{N}$ (the subscript~$\square_\osym{N}$) denotes $N$-fold \emph{symmetric tensor objects}.



\end{itemize}
Let~$(X, \tau)$ be a topological space with $\sigma$-finite Borel measure~$\nu$. Throughout this article, we shall use the following symbols and phrases:
\begin{enumerate}[$(a)$]
\item $L^p(\nu)$ $(1 \le p \le \infty)$ for the space of $\nu$-equivalence classes of real-valued functions~$u$ so that $|u|^p$ is $\nu$-integrable when $1 \le p<\infty$, and $u$ is $\nu$-essentially bounded when $p=\infty$; The $L^p(\nu)$-norm is denoted by $\|u\|^p_{L^p(\nu)}:=\|u\|^p_p:=\int_X |u|^p \diff \nu$ for $1 \le p <\infty$, and $\|u\|_{L^\infty(\nu)}:=\|u\|_\infty=\esssup_X u$;  When~$p=2$, the inner-product is denoted by $(u, v)_{L^2(\nu)}:=(u, v)_{2}:=\int_X uv \diff \nu$; 

\item  $L^p_s(\nu^{\otimes n}):=\{u \in L^p(\nu^{\otimes n}): u\ \text{is symmetric}\}$ where $u$ is said to be {\it symmetric} if $u(x_1, \ldots, x_k)=u(x_{\sigma(1)}, \ldots, x_{\sigma(k)})$ for every element $\sigma \in \mathfrak S(k)$ in the $k$-symmetric group;

\item $\mathscr B(X, \tau)$ for the Borel $\sigma$-algebra; $\mathscr B(X, \tau)^{\nu}$ for the completion of $\mathscr B(X, \tau)$ with respect to~$\nu$; $\mathscr B(X, \tau)^*$ be the universal $\sigma$-algebra, i.e., the intersection of $\mathscr B(X)^{\rho}$ among all Borel probability measures $\rho$ on $X$ (we do not specify the topology and simply write $\mathscr B(X), \mathscr B(X)^{\nu}, \mathscr B(X)^*$ where the topology is clear from the context); Measurable functions with respect to $\mathscr B(X)$, $\mathscr B(X)^{\nu}$, $\mathscr B(X)^*$ are called {\it Borel measurable, $\nu$-measurable, universally measurable} respectively. 
%
\item $C_b(X)$ for the space of $\T$-continuous bounded functions on~$X$; if $X$ is locally compact, $C_0(X)$ denotes the space of $\tau$-continuous and compactly supported  functions on $X$; $C_0^\infty(\R^n)$ for the space of compactly supported smooth functions on the $n$-dimensional Euclidean space~$\R^n$;
\item $\1_{A}$ for the indicator function on $A$, i.e., $\1_{A}(x)=1$ if and only if $x \in A$ and $\1_A(x)=0$ otherwise; $\delta_x$ for the Dirac measure at $x$, i.e., $\delta_x(A)=1$ if and only if $x \in A$ and $\delta_x(A)=0$ otherwise;
\item A sequence $(B_r)_{r \in \N}$ of subsets in $X$ is called {\it exhaustion} if $B_r \subset B_{r'}$ whenever $r \le r'$ and $\cup_{r \in \N} B_r = X$; If $B_r$ possesses a certain property $P$ for every $r \in \N$ (e.g., $B_r$ is compact, convex, or domain), we call it {\it P exhaustion} (e.g., compact exhaustion, compact convex exhaustion, domain exhaustion). 
\end{enumerate}

\subsection{Dirichlet form}\label{subsec:D}
We refer the reader to \cite{MaRoe90, BouHir91} for this subsection. Throughout this article, a Hilbert space always means a separable Hilbert space with inner product~$(\cdot, \cdot)_H$ taking value in $\R$. 

\paragraph{Dirichlet form}Given a bilinear form~$(Q,\dom{Q})$ on a Hilbert space~$H$, we write
\begin{align*}
Q(u)\eqdef Q(u,u) \comm \qquad Q_\alpha(u,v)\eqdef Q(u,v)+\alpha(u, v)_H\comm \alpha>0\fstop
\end{align*}
Let $(X, \Sigma, \nu)$ be a $\sigma$-finite measure space. A \emph{symmetric Dirichlet form on~$L^2(\nu)$} is a non-negative definite densely defined closed symmetric bilinear form~$(Q,\dom{Q})$ on~$L^2(\nu)$ satisfying the Markov property
\begin{align} \label{e:Markov}
u_0\eqdef 0\vee u \wedge 1\in \dom{Q} \quad \text{and} \quad Q(u_0)\leq Q(u)\comm \quad u\in\dom{Q}\fstop
\end{align}
Throughout this article, {\it Dirichlet form} always means {\it symmetric} Dirichlet form. A subspace $\mathcal F \subset \dom{Q}$ is called {\it Markovian subspace} if \eqref{e:Markov} holds for every $u \in \mathcal F$.
If not otherwise stated,~$\dom{Q}$ is always regarded as a Hilbert space with norm
$$\norm{\emparg}_{\dom{Q}}\eqdef Q_1(\emparg)^{1/2}:=\sqrt{Q(\emparg)+\norm{\emparg}_{L^2(\nu)}^2}\fstop $$
To distinguish Dirichlet forms defined in different base spaces with different reference measures, we write $Q^{X, \nu}$ to specify the base space $X$ and the reference measure $\nu$. We denote {\it the extended domain of $\dom{Q}$} by $\dom{Q}_e$ defined as 
\begin{align}\label{d:EDD}
\dom{Q}_e:=\{u: X \to \R:\ &\text{$\nu$-measurable,\ $|u|<\infty$ $\nu$-a.e.}
\\
&\text{$\exists \{u_n\}_{n \in \N}\subset \dom{Q}$ \ $Q$-Cauchy \ s.t.\ $u_n \to u$ \ $\nu$-a.e.}\} \notag
\end{align}

\paragraph{Square field}A Dirichlet form $(Q, \dom{Q})$ {\it admits square field $\cdc$} if there exists a dense subspace $H \subset \dom{Q} \cap L^\infty(\nu)$ so that for every~$u \in H$, there exists $v \in L^1(\nu)$ so that 
$$2Q(uh, u) -Q(h, u^2) = \int_X h v \diff \nu \quad h \in \dom{Q} \cap L^\infty(\nu) \fstop$$
This $v$ is denoted by $\Gamma(u)$. The square field $\Gamma$ can be uniquely extended as an operator on $\dom{Q}  \times \dom{Q} \to L^1(\nu)$ (\cite[Thm.\ I.4.1.3]{BouHir91}).

\paragraph{Resolvent,  semigroup and generator}We refer the reader to \cite[Chap.~I, Sec.~2]{MaRoe90} for this paragraph. Let $(Q, \dom{Q})$ be a symmetric closed form on a Hilbert space~$H$.  {\it The infinitesimal generator $(A, \dom{A})$} corresponding to $(Q, \dom{Q})$ is the unique densely defined closed operator on $H$ satisfying the following integration-by-parts formula:
$$-(u, Av)_{H}=Q(u, v) \quad \forall u \in \dom{Q},\ v\in \dom{A} \fstop$$
{\it The resolvent operator} $\{R_\alpha\}_{\alpha > 0}$ is the unique bounded linear operator on $H$ satisfying 
$$Q_\alpha(R_\alpha u, v) = (u, v)_{H} \quad \forall u \in H \quad v \in \dom{Q} \fstop$$
{\it The semigroup} $\{T_t\}_{t > 0}$ is the unique bounded linear operator on $H$ satisfying 
$$G_\alpha u = \int_{0}^\infty e^{-\alpha t} T_{t}u \diff t \quad u \in H\fstop$$

\paragraph{Locality}Let $(Q, \dom{Q})$ be a Dirihclet form on $L^2(\nu)$. It is called {\it local}  if for every $F, G \in C_c^\infty(\R)$ and  $u \in \dom{Q}$, 
$${\rm supp}[F] \cap {\rm supp}[G] = \emptyset \implies Q(F_0\circ u, G_0\circ u)=0 \comma$$
where $F_0(x):=F(x)-F(0)$ and $G_0(x):=G(x)-G(0)$ (see~\cite[Dfn.~5.1.2 in Chap.~I]{BouHir91}).

\paragraph{Quasi-notion}Let $(X, \tau)$ be a Polish space and $\nu$ be a $\sigma$-finite Borel measure on $X$ and $(Q, \dom{Q})$ be a Dirichlet form on $L^2(\nu)$.
For any~$A\in\mathscr B(X)$, define
$$\dom{Q}_A\eqdef \set{u\in \dom{Q}: u= 0 \ \text{$\nu$-a.e. on}\ X\setminus A}.$$
A sequence $\seq{A_n}_{n \in \N}\subset \mathscr B(X)$ is a \emph{Borel nest} if $\cup_{n \in \N} \dom{Q}_{A_n}$ is dense in~$\dom{Q}$.
A \emph{closed (resp.~compact) nest} is a Borel nest consisting of closed (resp.~compact) sets. 
A set~$N\subset X$ is \emph{exceptional} if there exists a closed nest~$\seq{F_n}_{n  \in \N}$ so that~$N\subset X\setminus \cup_n F_n$. It is a standard fact that any exceptional set $N$ is $\nu$-negligible (see, e.g., \cite[Exe.~2.3]{MaRoe90}). 
%
A property~$(p_x)$ depending on~$x\in X$ holds \emph{quasi-everywhere} (in short: q.e.) if there exists a polar set~$N$ so that~$(p_x)$ holds for every~$x\in X\setminus N$. For a closed nest $\seq{F_n}_{n \in \N}$, define 
$$C(\seq{F_n}_{n \in \N}):=\{u: A \to \R: \cup_{n \ge 1} F_n  \subset A \subset X,\ u|_{F_n} \ \text{is continuous for every $n \in \N$}\} \fstop$$
A function $u$ defined quasi-everywhere on $X$ is {\it quasi-continuous} if there exists a closed nest $\seq{F_n}_{n \in \N}$ so that $u \in C(\seq{F_n}_{n \in \N})$.  

A Dirichlet form $(Q, \dom{Q})$ on $L^2(\nu)$ is {\it quasi-regular} if the following conditions hold:
\begin{enumerate}[{\rm (QR1)}]
\item there exists a compact nest $(A_n)_{n \in \N}$; 
\item there exists a dense subspace $\mathcal D \subset \dom{Q}$ so that every $u \in \mathcal D$ has a quasi-continuous~$\nu$-version~$\tilde{u}$;
\item there exists $\{u_n: n \in \N\} \subset \dom{Q}$ and a polar set $N \subset X$ so that every $u_n$ has a quasi-continuous~$\nu$-version~$\tilde{u}_n$ and $\{\tilde{u}_n: n \in \N\}$ separates points in $X \setminus N$.
\end{enumerate}

\paragraph{Maximal function}Let $(Q, \dom{Q})$ be a local Dirichlet form  on $L^2(\nu)$ with $\mathbf 1 \in \dom{Q}$ having a square field $\cdc^{Q}$.  
Define 
$$\mathbb D_0:=\{u \in \dom{Q} \cap L^\infty(\nu): \cdc^{Q}(u) \le 1\} \fstop$$
By \cite[Thm.~1.2]{HinRam03}, for a $\nu$-positive measure set $A \subset X$, there exists a unique $\nu$-measurable function $\bar{\sf d}_{\nu, A}$ called {\it maximal function} satisfying 
\begin{enumerate}[(a)]
\item $\bar{\sf d}_{\nu, A} \wedge c \in \mathbb D_0$ for every $c \ge 0$;
\item $\bar{\sf d}_{\nu, A} =0$ $\nu$-a.e.~on~$A$;
\item $\bar{\sf d}_{\nu, A}$ is the largest function satisfying the previous two properties, i.e., if there exists a function $v$ satisfying (a) and (b), then 
\begin{align} \label{d:HRMF}
v \le \bar{\sf d}_{\nu, A} \quad \text{$\nu$-a.e.} \fstop
\end{align}
\end{enumerate}

\subsection{Extended metric space} 
Let~$X$ be any non-empty set. A function $\mssd\colon X^\tym{2}\rar [0,\infty]$ is called an \emph{extended distance} if it is symmetric, satisfying the triangle inequality and not vanishing outside the diagonal in~$X^{\tym{2}}$, i.e.~$\mssd(x,y)=0$ iff~$x=y$; a \emph{distance} if it is finite, i.e., $\mssd(x, y)<\infty$ for every $x, y \in X$. A space~$X$ equipped with an extended metric $\mssd$ is called {\it an extended metric space}~$(X, \mssd)$.
Let $\nu$ be a measure on $X$ on a $\sigma$-algebra $\Sigma$. Define 
\begin{align}\label{d:DFS}
\mssd(\cdot, B):=\inf_{y \in B}\mssd(\cdot, y) \comma\quad \mssd^\nu(A, B):=\nu\text{-}\essinf_{A}\inf_{y \in B}\mssd(\cdot, y)\comma \quad A, B \in \Sigma^\nu \comma
\end{align}
the latter of which is well-defined whenever $\inf_{y \in B}\mssd(\cdot, y)$ is $\nu$-measurable (i.e., $\Sigma^\nu$-measurable). 

\paragraph{Lipschitz algebra}A function~$f\colon X\rar \R$ is called {\it $\mssd$-Lipschitz} if there exists a constant~$L>0$ so that
\begin{align}\label{eq:Lipschitz}
\tabs{ u(x)-u(y)}\leq L\, \mssd(x,y) \comm \qquad x,y\in X \fstop
\end{align}
The smallest constant~$L$ satisfying~\eqref{eq:Lipschitz}  is called {\it the (global) Lipschitz constant of $u$}, denoted by~$\Lip_\mssd{(u)}$.
For any non-empty set~$A\subset X$, define~$\Lip(A,\mssd)$, resp.~$\bLip(A,\mssd)$ for the family of all $\mssd$-Lipschitz functions, resp.\ bounded $\mssd$-Lipschitz functions on~$A$.
For simplicity of notation, we omit specifying the base space~$X$ and simply write ~$\Lip(\mssd)\eqdef \Lip(X,\mssd)$, resp.\ $\bLip(\mssd)\eqdef \bLip(X,\mssd)$ if no confusion can occur. Define also $\Lip^\alpha(\mssd):=\{u \in \Lip(\mssd): \Lip_{\mssd}(u) \le \alpha\}$ and $\Lip^\alpha_b(\mssd):=\Lip^\alpha(\mssd) \cap \bLip(\mssd).$ For a measure $\nu$ on $X$ defined on a $\sigma$-algebra $\Sigma$ and a topology $\tau$ on $X$,  we define respectively
\begin{align*}
&\Lip(\mssd, \nu):=\{u \in \Lip(\mssd): \text{$u$ is $\nu$-measurable}\} \comma
\\
&\Lip(\mssd, \tau):=\{u \in \Lip(\mssd): \text{$u$ is $\tau$-continuous}\} \comma
\end{align*}
and we further define~$\Lip_b(\mssd, \nu)$, $\Lip^\alpha_b(\mssd, \nu)$, $\Lip_b(\mssd, \tau)$ and $\Lip^\alpha_b(\mssd, \tau)$ for the corresponding subspaces of $\nu$-measurable functions (resp.~$\tau$-continuous functions).

Let $\nu$ be a finite measure on $X$ and let $(Q, \dom{Q})$ be a local Dirichlet form on $L^2(\nu)$ having a square filed $\cdc^{Q}$. We say that {\it Rademacher-type property} holds for $\Lip_b({\mssd}, \nu)$ (resp.~$\Lip_b({\mssd}, \tau)$) if 
\begin{align}\tag*{$\mathsf{(Rad_{\mssd, \nu})}$}
\Lip_b({\mssd}, \nu) \subset \dom{Q}\comma \quad \cdc^Q(u) \le \Lip_{\mssd}(u)^2 \comma
\end{align}
\begin{align}\tag*{$\mathsf{(Rad_{\mssd, \tau})}$}
\Lip_b({\mssd}, \tau) \subset \dom{Q}\comma \quad \cdc^Q(u) \le \Lip_{\mssd}(u)^2 \comma \end{align}
respectively.

\subsection{Configuration space}
A \emph{configuration} on a locally compact Polish space~$X$ is an $\overline\N_0$-valued Radon measure~$\gamma$ on~$X$, which can be expressed by $\gamma = \sum_{i=1}^N \delta_{x_i}$ for $N \in \overline{\N}_0$, where $x_i \in X$ for every $i$ and  $\gamma \equiv 0$  when $N=0$.
%
The \emph{configuration space}~$\U=\dUpsilon(X)$ is the space of all configurations over~$X$. The space~$\dUpsilon$ is equipped with the vague topology~$\tau_\mrmv$, i.e., the topology generated by the duality of the space $C_0(X)$ of continuous functions with compact support. 
We write the {\it restriction}~$\gamma_A\eqdef \gamma\mrestr{A}$ for~$A \in \mathscr B(X)$ and the restriction map is denoted by 
\begin{align}\label{eq:ProjUpsilon}
\gamma\longmapsto \pr_A(\gamma):=\gamma_{A}\fstop
\end{align}
The $N$-particle configuration space is denoted by
\begin{equation*}
\begin{aligned}
\dUpsilon^N(X)\ \eqdef&\ \set{\gamma\in \dUpsilon: \gamma(X)=N}\comma
\end{aligned}
\quad N\in\overline\N_0 \fstop
\end{equation*}
Let $\mathfrak S_k$ be the $k$-symmetric group. It can be readily seen that the $k$-particle configuration space~$\U^k$ is isomorphic to the quotient space~$X^{\times k}/\mathfrak S_k$:
\begin{align} \label{e:STS}
\dUpsilon^k(X)\cong X^{\odot k}:=X^{\times k}/\mathfrak S_k \comma \quad k \in \N_0\fstop
\end{align}
The associated projection map from $X^{\times k}$ to the quotient space~$X^{\times k}/\mathfrak S_k$ is denoted by~$\quot_k$. 
For $\eta \in \U$ and~$E\in \mathscr B(X)$, we define
\begin{align} \label{e:CES}
\U_E^\eta:=\{\gamma \in \U: \gamma_{E^c}=\eta_{E^c}\} \fstop
\end{align}

\paragraph{Conditional probability}For a Borel probability measure~$\mu$ on~$\U$ and $E \in \mathscr B(X)$, 
$$\mu(\ \cdot \ | \ \pr_{E^c}(\cdot)=\eta_{E^c})$$ denotes the regular conditional probability of $\mu$ conditioned to be $\eta \in \U$ with respect to the $\sigma$-algebra generated by the projection map $\gamma \in \U \mapsto \pr_{E}(\gamma)=\gamma_{E} \in \U(E)$ (see e.g., \cite[Dfn.~3.32]{LzDSSuz21}).  Let~$\mu_{E}^\eta$ be the probability measure on~$\U(E)$ defined as 
\begin{align} \label{d:CPB}
\mu_{E}^\eta:=(\pr_{E})_\#\mu(\ \cdot \ | \ \pr_{E^c}(\cdot)=\eta_{E^c}) \comma
\end{align}
and its restriction on~the $k$-particle configuration space~$\U^k(E)$ is denoted by~$\mu_{E}^{\eta, k}:=\mu_{E}^\eta\mrestr{\U^k(E)}$.
\begin{rem}The conditional probability~$\mu(\ \cdot \ | \ \pr_{E^c}(\cdot)=\eta_{E^c})$ is a probability measure on~$\U$ whose support is contained in~$\U_E^\eta$ while $\mu_E^\eta$ is a probability measure on~$\U(E)$. We may identify the two of them without loss of information in the sense that  
\begin{align} \label{r:BMP}
\pr_{E}: \U_E^\eta \to \U(E) \ \text{is  a bi-measure-preserving bijection} \fstop
\end{align}
Namely, the projection map~$\pr_{E}$ is bijective with the inverse map $\pr_{E}^{-1}$ defined as~$\pr^{-1}_{E}(\gamma):=\gamma+\eta$, and both $\pr_{E}$ and~$\pr_{E}^{-1}$ are measure-preserving between the two measures~$\mu(\ \cdot \ | \ \pr_{E^c}(\cdot)=\eta_{E^c})$ and $\mu_{E}^\eta$.
\end{rem}  

For a measurable function $ u\colon \dUpsilon\to \R$, $E \in \mathscr B(X)$ and $\eta \in \dUpsilon$, we define 
 \begin{align} \label{e:SEF}
u_{E}^\eta(\gamma)\eqdef  u(\gamma+\eta_{E^\complement})  \qquad \gamma\in \dUpsilon(E) \fstop
 \end{align}
By  the property of the conditional probability, it is straightforward to see that for every $u \in L^1(\mu)$, 
\begin{align} \label{p:ConditionalIntegration}
\int_{\dUpsilon} u \diff\QP = \int_{\dUpsilon} \quadre{\int_{\dUpsilon(E)} u_{E}^\eta \diff \QP^\eta_E }\diff\QP(\eta) \fstop
\end{align}
See, e.g., \cite[Prop.\ 3.44]{LzDSSuz21}. For~$\Omega \in \mathscr B(\U)$, $E \in \mathscr B(X)$ and~$\eta \in \U$,  define~$\Omega_E^\eta \subset \U(E)$ as
 \begin{align} \label{e:SEF2}
 \Omega_E^\eta:=\{\gamma \in \U(E): \gamma+\eta_{E^c} \in \Omega\} \fstop
 \end{align}
 By applying the disintegration formula~\eqref{p:ConditionalIntegration} to $u=\1_{\Omega}$, we obtain
\begin{align} \label{p:ConditionalIntegration2}
\QP(\Omega)=\int_{\U} \QP_E^\eta(\Omega_E^\eta) \diff \QP(\eta) \fstop
\end{align}
%

\paragraph{Poisson measure}Let $(X, \tau, \nu)$ be a locally compact Polish space with Radon measure~$\nu$ satisfying~$\nu(X)<\infty$.  {\it The Poisson measure} $\pi_{\nu}$ on~$\U(X)$ with intensity~$\nu$ is defined in terms of the symmetric tensor measures $\{\nu^{\odot k}: k \in \N\}$ as follows:
\begin{align} \label{d:PS}
&\pi_{\nu}(\cdot):=e^{-\nu(X)}\sum_{k=1}^\infty \nu^{\odot k}\bigl(\cdot \cap {\U^k(X)}\bigr)=e^{-\nu(X)}\sum_{k=1}^\infty \frac{1}{k!}(\quot_k)_\#\nu^{\otimes k}\bigl(\cdot \cap {\U^k(X)} \bigr) \comma
\\
&\pi^k_{\nu}(\cdot):=\pi_{\nu}(\cdot)\mrestr{\U^{k}(X)} \fstop \notag
\end{align}
In the case that $\nu$ is $\sigma$-finite, take an exhaustion $\seq{B_r}_{r \in \N}$ so that $\nu(B_r)<\infty$ for every $r \in \N$. 
The \emph{Poisson} (\emph{random}) \emph{measure~$\PP_\nu$} with intensity~$\nu$ is the unique probability measure on~$\U$ satisfying
\begin{align}\label{eq:PoissonRestriction}
(\pr_{B_r})_\pfwd \PP_\nu=\PP_{\nu_{B_r}}\comma  \quad r\in \N \fstop
\end{align}
The measure $\PP_\nu$ does not depend on the choice of $\seq{B_r}_{r \in \N}$. 

\paragraph{$L^2$-transportation distance}Let $(X, \mssd)$ be a locally compact complete separable metric space. For~$i=1,2$ let~$\proj^i\colon X^{\times 2}\rar X$ denote the projection to the~$i^\text{th}$ coordinate for $i=1,2$. 
For~$\gamma,\eta\in \dUpsilon$, let~$\Cpl(\gamma,\eta)$ be the set of all couplings of~$\gamma$ and~$\eta$, i.e., 
\begin{align} \label{d:OTD}
\Cpl(\gamma,\eta)\eqdef \set{\cpl\in \Meas(X^{\tym{2}}) \colon \proj^1_\pfwd \cpl =\gamma \comma \proj^2_\pfwd \cpl=\eta} \fstop
\end{align}
Here $\Meas(X^{\tym{2}})$ denotes the space of all Radon measures on $X^{\tym{2}}$. 
The \emph{$L^2$-transportation}  \emph{extended distance} on~$\dUpsilon(X)$ is
\begin{align}\label{eq:d:W2Upsilon}
\mssd_{\dUpsilon}(\gamma,\eta)\eqdef \inf_{\cpl\in\Cpl(\gamma,\eta)} \paren{\int_{X^{\times 2}} \mssd^2(x,y) \diff\cpl(x,y)}^{1/2}\comma \qquad \inf{\emp}=+\infty \fstop
\end{align}
We refer the reader to e.g., \cite[Prop.~4.27, 4.29, Thm.~4.37, Prop.~5.12]{LzDSSuz21} and \cite[Lem.~4.1, 4.2]{RoeSch99} for details regarding the $L^2$-transportation extended distance $\mssd_{\dUpsilon}$ and examples of $\mssd_{\dUpsilon}$-Lipschitz functions. 
It is important to note that $\mssd_\dUpsilon$ is an {\it extended} distance, attaining the value~$+\infty$ and $\mssd_\U$ is lower semi-continuous with respect to the product vague topology $\tau_\mrmv^{\times 2}$ but never $\tau_\mrmv^{\times 2}$-continuous. 

We introduce a variant of the $L^2$-transportation extended distance, called \emph{$L^2$-transportation-type}  \emph{extended distance}~$\bar{\mssd}_\U$ defined as 
\begin{align} \label{eq:dW2L}
\bar{\mssd}_\U(\gamma, \eta):=
\begin{cases}
\mssd_\U(\gamma, \eta) \quad &\text{if $\gamma_{B_r^c}=\eta_{B_r^c}$ for some $r>0$\ ,}
\\
+\infty \quad  &\text{otherwise} \comma
\end{cases}
\end{align}
where $(B_r)_{r \in \N}$ is a compact exhaustion.
The definition~\eqref{eq:dW2L} does not depend on the choice of an exhaustion. 
By definition, $\mssd_\U \le \bar{\mssd}_\U$ on $\U$ and  $\mssd_\U = \bar{\mssd}_\U$ on $\U(E)$ for every compact subset~$E\subset X$. In particular, we have 
\begin{align} \label{e:LLR}
\Lip(\U, \mssd_\U) \subset \Lip(\U, \bar{\mssd}_\U) \comma \quad \Lip_{\bar{\mssd}_\U}(u) \le  \Lip_{\mssd_\U}(u)\comma  \quad u \in \Lip(\U, \mssd_\U)\fstop
\end{align}
It can be readily seen readily~that 
\begin{align} \label{e:LLR2}
\bar{\mssd}_\U(\gamma, \eta) <\infty \quad \iff \quad \gamma_{B_r^c}=\eta_{B_r^c} \comma \gamma(B_r)=\eta(B_r) \quad \text{for some $r>0$}\fstop
\end{align}
\begin{prop}The map $\bar{\mssd}_{\U}: \U^{\times 2} \to \R$ is $\mathscr B(\U^{\times 2}, \tau_\mrmv^{\times 2})$-measurable. 
\end{prop}
\begin{proof}
According to~\eqref{eq:dW2L}, we can write 
\begin{align} \label{d:MBL}
\bar{\mssd}_\U= \mssd_\U\1_{\Alpha} + \infty\1_{\Alpha^c} \comma
\end{align}
where 
\begin{align} \label{d:DDA}
\Alpha:=\{(\gamma, \eta) \in \U^{\times 2}: \ \exists r>0 \ \text{s.t.}\ \gamma_{B_r^c}=\eta_{B_r^c} \}=\cup_{r \in \N}\{(\gamma, \eta) \in \U^{\times 2}: \ \gamma_{B_r^c}=\eta_{B_r^c} \} \fstop
\end{align}
Let $\Alpha_r:=\{(\gamma, \eta) \in \U^{\times 2}: \ \gamma_{B_r^c}=\eta_{B_r^c} \}$, which is $\tau_\mrmv^{\times 2}$-closed. As $\Alpha$ is a countable union of closed sets, we obtain~$\Alpha \in \mathscr B(\U^{\times 2}, \tau_\mrmv^{\times 2})$. Noting that $\mssd_\U$ is $\tau_\mrmv^{\times 2}$-lower semi-continuous (\cite[(vi)~Lem.4.1]{RoeSch99}, see also~\cite[(vii) Prop.~4.27]{LzDSSuz21}), the function~$\mssd_\U$ is in particular $\mathscr B(\U^{\times 2}, \tau_\mrmv^{\times 2})$-measurable, thus, the expression~\eqref{d:MBL} concludes the $\mathscr B(\U^{\times 2}, \tau_\mrmv^{\times 2})$-measurablility of~$\bar{\mssd}_\U$.
\end{proof}
The following universal measurability of the distance function from a set will be used in Thm.~\ref{thm: Equiv}. 
\begin{prop}\label{p:UMD}Let $\Lambda \in \mathscr B(\U, \tau_\mrmv)$ and 
\begin{align} \label{d:SWD}
\bar{\mssd}_\U(\gamma, \Lambda):=\inf_{\eta \in \Lambda} \bar{\mssd}_\U(\gamma, \eta)\fstop
\end{align}
The map $\U \ni \gamma \mapsto \bar{\mssd}_\U(\gamma, \Lambda)$ is universally measurable (i.e., $\mathscr B(\U, \tau_\mrmv)^*$-measurable). 
\end{prop}
\begin{proof}
It suffices to show that every sub-level set~$\Lambda_r:=\{\gamma \in \U: \bar{\mssd}_\U(\gamma, \Lambda) \le r\}$ is universally measurable. 
Define $I: \U(X^{\times 2}) \to \R$ as  
$$\alpha \mapsto \int_{X^{\times 2}} \mssd(x, y) \diff \alpha(x, y) \fstop$$
The map~$I$ is lower semi-continuous in~$\U(X^{\times 2})$~(\cite[(i) Lem.~4.1]{RoeSch99}, see also~\cite[(ii) Prop.~4.27]{LzDSSuz21}). The following set~$\Beta_r$ is, therefore, closed in~$\U(X^{\times 2})$: 
$$\Beta_r:=\Bigl\{\alpha \in \U(X^{\times 2}): I(\alpha) \le r^2\Bigr\}\fstop$$
Noting~$\U^{\times 2} \subset \U(X^{\times 2})$ is a Borel subset, the Borel set~$\Alpha \in \mathscr B(\U^{\times 2})$ defined in~\eqref{d:DDA} can be thought of as a Borel set in~$\U(X^{\times 2}).$ 
Define $\tilde{\Beta}_r:=\Beta_r \cap \Alpha \in \mathscr B(\U(X^{\times 2}))$.
By~\eqref{eq:dW2L}, 
\begin{align} \label{e:LMDR}
\Lambda_r=\{\proj^1_\#\alpha: \alpha \in \tilde{\Beta}_r\comma \proj^2_\#\alpha \in \Lambda\}=\proj^1_\#\Bigl(\tilde{\Beta}_r \cap (\proj^2_\#)^{-1}(\Lambda)\Bigr) \comma
\end{align}
where $\proj^i$ has been defined just before~\eqref{d:OTD}. 
As the map~$\proj^{i}_\#: \U(X^{\times 2}) \to \U$  is continuous, the set~$\tilde{\Beta}_r \cap (\proj^2_\#)^{-1}(\Lambda)$ is a Borel set in~$\U(X^{\times 2})$. 
Noting the fact that a continuous image of a Borel set in a Polish space is Suslin  (e.g., \cite[Thm.~21.10]{Kec95}), we conclude by~\eqref{e:LMDR} that~$\Lambda_r$ is a Suslin set in $\U$, therefore, universally measurable (see, e.g., \cite[431B Corollary]{Fre01}). 
\end{proof}

We recall a lemma, which states that the operation $(\cdot)_{E}^\eta$ defined in~\eqref{e:SEF} maps from $\Lip(\U, \bar{\mssd}_\U)$ to $\Lip(\U(E), \mssd_\U)$ and contracts Lipschitz constants. 
\begin{lem}\label{l:SEF3}
Let $u \in \Lip(\U, \bar{\mssd}_\U)$ and $E \subset X$ be a Polish subset. Then, $u_{E}^\eta \in \Lip(\U(E), \mssd_\U)$ and 
\begin{align} \label{e:SEF3}
\Lip_{\mssd_\U}(u_E^\eta) \le \Lip_{\bar{\mssd}_\U}(u) \comma \quad \eta \in \U\fstop
\end{align}
\end{lem}
\begin{proof}
Let $\gamma, \zeta \in \U(E)$ and $\eta \in \U$. Then, 
\begin{align*}
|u_E^\eta(\gamma)-u_E^\eta(\zeta)|=|u(\gamma+\eta_{E^c})-u(\zeta+\eta_{E^c})| &\le \Lip_{\bar{\mssd}_\U}(u) \bar{\mssd}_\U(\gamma+\eta_{E^c}, \zeta+\eta_{E^c}) 
\\
&= \Lip_{\bar{\mssd}_\U}(u) \mssd_\U(\gamma, \zeta) \fstop
\end{align*}
The proof is completed. 
\end{proof}
\begin{rem} By the same proof,  one can replace $\bar{\mssd}_\U$ with $\mssd_\U$ in the statement of Lem.~\ref{l:SEF3} and obtain
\begin{align} \label{e:SEF3-2}
\Lip_{\mssd_\U}(u_E^\eta) \le \Lip_{{\mssd}_\U}(u) \comma \quad \eta \in \U\fstop
\end{align}
\end{rem}
\subsection{Tail triviality} \label{subsec: TT}
Let $\seq{B_r}_{r \in \N}$ be a compact exhaustion. 
Let $\sigma({\rm pr}_{B_r^c})$ denote the $\sigma$-algebra generated by the projection map $\U \ni \gamma \mapsto {\rm pr}_{B_r^c}(\gamma)=\gamma_{B_r^c} \in \U(B_r^c)$. 
We set $\mathscr T(\U):=\cap_{r \in \N}\sigma({\rm pr}_{B_r^c})$ and call it {\it tail $\sigma$-algebra}. 
By the definition of the tail $\sigma$-algebra~$\mathscr T(\dUpsilon)$, every set $\Xi \in \mathscr T(\dUpsilon)$ satisfies the following condition:
\begin{align} \label{eq: EXP0}
\Xi=\U(B_r) + {\rm pr}_{B_r^c}(\Xi), \quad  r \in \N\fstop
\end{align}
For a set $\Xi \subset \U$,  define $\mathcal T_{B_r}({\Xi}):=({\rm pr}_{B_r^c})^{-1}\circ {\rm pr}_{B_r^c}(\Xi)$. By definition, $\Xi \subset \mathcal T_{B_r}(\Xi)$, and $\mathcal T_{B_r}({\Xi}) \subset \mathcal T_{B_{r'}}({\Xi})$ whenever $r\le r'$. 
Define {\it the tail set of $\Xi$} by 
\begin{align} \label{eq: ts}
\mathcal T(\Xi):=\cup_{r \in \N} \mathcal T_{B_{r}}({\Xi}) \fstop
\end{align}
The tail set $\mathcal T(\Xi)$ of $\Xi$ does not depend on the choice of the exhaustion~$\seq{B_{r}}$. It can be readily shown that $\mathcal T(\Xi) \in \mathscr T(\U)$ and $\Xi \subset \mathcal T({\Xi})$. 
\begin{defs}[Tail triviality] \normalfont \label{defn: TT}
A Borel probability measure $\mu$ on $\U(X)$ is called {\it tail trivial} \ref{ass:T} if 
\begin{align}\tag*{$(\mathsf{T})_{\ref{defn: TT}}$}\label{ass:T}
\mu(\Xi)\in \{0, 1\} \quad  \text{whenever} \quad \Xi \in \mathscr T(\U) \fstop
\end{align}
\end{defs}

\begin{ese}\normalfont  \label{exa: TT}
The tail triviality has been verified for a wide class of point processes. 
\begin{itemize}
\item[(i)] (Determinantal point processes) Let $X$ be a locally compact Polish space. 
Then, all determinantal point processes whose kernel are locally trace-class positive contraction satisfy the tail triviality (see \cite[Theorem 2.1]{Lyo18} and \cite{BufQiuSha21,  OsaOsa18, ShiTak03b}). In particular, $\mathrm{sine}_2$,~$\mathrm{Bessel}_{\alpha,2}$, $\mathrm{Airy}_2$ and Ginibre point processes are tail trivial. 
\item[(ii)] (Extremal Gibbs measure)  A canonical Gibbs measure $\mu$ is tail  trivial iff $\mu$ is extremal (see \cite[Cor.\ 7.4]{Geo11}). In particular, Gibbs measures of the Ruelle type with sufficiently small activity constants are extremal (see \cite[Thm.\ 5.7]{Rue70}).
\end{itemize}
\end{ese}

\subsection{Number-rigidity} \label{subsec: Rig}
The following definition of the number rigidity on the configuration space $\U$ over a locally compact Polish space $X$ is an adaptation of the number rigidity introduced by Ghosh--Peres \cite{GhoPer17} originally in the setting of the configuration space over the complex plane. 
\begin{defs}[Number-rigidity: cf.~Ghosh--Peres {\cite[Thm.\ 1]{GhoPer17}}]\label{ass:Rigidity}
A Borel probability measure $\QP$ on $\U$ has {\it the number rigidity} (in short: \ref{ass:Rig}) if, for every bounded Borel set~$E\subset X$, there exists $\Omega \subset \dUpsilon$ so that $\QP(\Omega)=1$  and, for every $\gamma, \eta \in \Omega$
\begin{align*}\tag*{$(\mathsf{R})_{\ref{ass:Rigidity}}$}\label{ass:Rig}
\text{$\gamma_{E^c} = \eta_{E^c}$ implies $\gamma(E) = \eta(E)$}  \fstop
\end{align*}
\end{defs}
\begin{ese} \label{exa: R}
The number rigidity has been verified for a variety of point processes: Ginibre and GAF (\cite{GhoPer17}), $\mathrm{sine}_\beta$ (\cite[Thm.\ 4.2]{Gho15}, \cite{ChhNaj18}, \cite{DerHarLebMai20}), $\mathrm{Airy}$, ~$\mathrm{Bessel}$, and $\mathrm{Gamma}$ (\cite{Buf16}), and Pfaffian (\cite{BufNikQiu19}) point processes. We refer the readers also to the survey \cite{GhoLeb17}. 
\end{ese}


\section{Construction of Dirichlet forms}\label{sec:CDF}
In this section, we construct a Dirichlet form on $\U=\U(\R^n)$. Let $\seq{B_r}_{r \in \N}$ be a compact convex domain exhaustion in $\R^n$. 
We first construct a Dirichlet form on~$\U(B_r)$ called {\it conditioned form} with invariant measure~$\QP_{B_r}^\eta$. We then lift it onto $\U$, which is called {\it truncated form}, whose gradient operator is truncated on $B_r$. Finally we take the monotone limit of the truncated forms as $r\to\infty$ and construct the limit Dirichlet form on $\U$.   

\paragraph{Notation}Hereinafter, we use the following notation.
\begin{itemize}
\item $\mssm$, $\mssm_r$ for the Lebesgue measure on $\R^n$ and its restriction on $B_r$ respectively;
\item $\mssd(x, y):=|x-y|$ for  the Euclidean distance in $\R^n$;
\item $\QP_r^\eta:=\QP_{B_r}^\eta$ for a probability measure $\QP$ on $\U$, defined in~\eqref{d:CPB};
\item $u_r^{\eta}:=u_{B_r}^\eta$ for a function $u: \U \to \R$, defined in~\eqref{e:SEF}.
\end{itemize}
\subsection{Conditioned Dirichlet forms on $\dUpsilon(B_r)$}

Let $W^{1,2}_s(\mssm^{\otimes k}_r)$ be the space of $\mssm^{\otimes k}_r$-classes of~$(1,2)$-Sobolev and {\it symmetric} functions on the product space $B_r^{\times k}$, i.e., 
$$W^{1,2}_s(\mssm^{\otimes k}_r):=\biggl\{u \in L^2_s(\mssm^{\otimes k}_{r}): \int_{B_r^{\times k}} |\nabla^{\otimes k} u|^2 \diff \mssm^{\otimes k}_{r} <\infty \biggr\} \comma$$
where $\nabla^{\otimes k}$ denotes the weak derivative on $(\R^n)^{\times k}$: $\nabla^{\otimes k}u:=(\partial_1 u, \ldots, \partial_ku)$.   
As the space $W^{1,2}_s(\mssm^{\otimes k}_r)$ consists of symmetric functions, the projection $\quot_k: B_r^{\times k} \to \U^k(B_r) \cong B_r^{\times k} /\mathfrak S_k$ acts on $W^{1,2}_s(\mssm^{\otimes k}_r)$ and the resulting quotient space is denoted by $W^{1,2}(\mssm_r^{\odot k})$:
$$W^{1,2}(\mssm^{\odot k}_r):=\biggl\{u \in L^2(\mssm^{\odot k}_{r}): \int_{\U^k(B_r)} |\nabla^{\odot k} u|^2 \diff \mssm^{\odot k}_{r} <\infty \biggr\} \comma$$
where $\nabla^{\odot k}$ is the quotient operator of the weak gradient operator $\nabla^{\otimes k}$ through the projection $\quot_k$ and $\mssm_r^{\odot k}$ is the symmetric product measure defined as 
$$\mssm_r^{\odot k}:=\frac{1}{k!} (\quot_k)_\#\mssm_r^{\otimes k} \fstop$$
\begin{defs}[Conditional absolute continuity]\label{d:ConditionalAC}
A Borel probability measure~$\QP$ on $\dUpsilon$ 
is \emph{conditionally absolutely continuous} (\emph{to~$\PP_\mssm$}) if 
\begin{equation}\tag*{$(\mathsf{CAC})_{\ref{d:ConditionalAC}}$}
\label{ass:CE}
 \QP^{\eta, k}_{r} \ll \PP_{\mssm_{r}}\mrestr{\dUpsilon^{k}(B_{r})}  \quad r\in \N\comma \ k \in \N_0  \comma \  \QP\text{-a.e.~$\eta$}\fstop
\end{equation}
Let $\mathcal K_r^\eta:=\{k \in \N_0: \QP_r^\eta(\U^k(B_r))>0\}$. We say that $\QP$ satisfies \ref{ass:SCE} if 
\begin{equation}\tag*{$(\mathsf{CAC'})_{\ref{d:ConditionalAC}}$}
\label{ass:SCE}
 \QP^{\eta, k}_{r} \sim \PP_{\mssm_{r}}\mrestr{\dUpsilon^{k}(B_{r})}  \quad r \in \N\comma \ \QP\text{-a.e.~$\eta$} \comma\ k \in \mathcal K_r^\eta   \fstop
\end{equation}
\end{defs}

For $u, v: \U(B_r) \to \R$ satisfying $u|_{\U^{k}(B_r)}, v|_{\U^{k}(B_r)} \in W^{1,2}(\mssm_r^{\odot k})$ for every $k \in \N$, set
\begin{align} \label{d:SFD}
\cdc^{\dUpsilon(B_r)}(u, v)& := \sum_{k=0}^\infty \Bigl\langle \nabla^{\odot k} u|_{\U^{k}(B_r)},  \nabla^{\odot k} v|_{\U^{k}(B_r)}\Bigr\rangle \comma \quad \cdc^{\dUpsilon(B_r)}(u):=\cdc^{\dUpsilon(B_r)}(u, u) \fstop 
\end{align}
Let us define the following algebra 
$${\rm LIP}_b(\U(B_r), \mssd_\U):=\{u: \U(B_r) \to \R \ \text{bounded}: u|_{\U^k(B_r)} \in \Lip_b(\U^k(B_r), \mssd_\U)\comma k \in \N\}\fstop$$
Note that the Lipschitz constant $\Lip_{\mssd_\U}(u|_{\U^k(B_r)})$ may not be bounded in $k$ for $u \in {\rm LIP}_b(\U(B_r), \mssd_\U)$, thus 
$$\Lip_b(\U(B_r), \mssd_\U) \subsetneq {\rm LIP}_b(\U(B_r), \mssd_\U) \fstop$$
The quadratic functional associated with~$\QP^{\eta, k}_{r}$ is denoted by
\begin{subequations}\label{eq:VariousForms}
\begin{align}
\label{eq:VariousFormsB}
&\EE{\dUpsilon(B_{r})}{\QP^{\eta, k}_{r}}(u) \eqdef \int_{\dUpsilon(B_{r})} |\nabla^{\odot k}u |^2\diff\QP^{\eta, k}_{r} \comma
\\
\label{eq:VariousFormsC}
&\EE{\dUpsilon(B_{r})}{\QP^\eta_{r}}(u) \eqdef \int_{\U(B_r)} \cdc^{\dUpsilon(B_r)}(u) \diff \QP_r^\eta \comma  \quad u \in {\rm LIP}_b(\U(B_{r}), \mssd_{\U})\fstop
\end{align}
\end{subequations}

\begin{defs}[Conditional closability]\label{ass:ConditionalClosability}
Let~$\QP$ be a Borel probability measure on~$\dUpsilon$ satisfying~\ref{ass:CE}. 
%
We say that~$\QP$ satisfies the \emph{conditional closability}~\ref{ass:ConditionalClos} if the form
\begin{align}\tag*{$(\mathsf{CC})_{\ref{ass:ConditionalClosability}}$}\label{ass:ConditionalClos}
&\EE{\dUpsilon(B_{r})}{\QP^\eta_{r}}(u,v)=\int_{\dUpsilon(B_{r})} \cdc^{\dUpsilon(B_{r})}(u,v) \diff\QP^\eta_{r}\comma
\\
& u,v \in {\rm LIP}_b(\U(B_{r}), \mssd_{\U}) \cap \{u: \U(B_r) \to \R: \EE{\dUpsilon(B_{r})}{\QP^\eta_{r}}(u)<\infty\} \fstop \notag
\end{align}
is closable on~$L^2\ttonde{\dUpsilon(B_r),\QP^\eta_{r}}$ for every $r \in \N$ and $\QP$-a.e.~$\eta\in\dUpsilon$. 
\end{defs}
\begin{rem} \label{r:HMZ} We give two remarks on~\ref{ass:ConditionalClos}. 
\begin{enumerate}[(i)]
\item The Rademacher theorem on~convex domains in the Euclidean space implying
\begin{align} \label{e:RDMF}
&\Lip_b(\U(B_{r}), {\mssd}_{\U})|_{\U^k(B_r)} \subset W^{1,2}(\mssm^{\odot k}_{r})\comma
\\
& \Bigl|\nabla^{\odot k}u|_{\U^k(B_r)}\Bigr| \le \Lip_{\mssd_\U}(u)  \quad \text{on} \quad \U^k(B_r)\comma \quad k \in \N \comma \notag
\end{align}
the following bound follows:
\begin{align} \label{e:RDMF2}
\cdc^{\dUpsilon(B_r)}(u) \le  \Lip_{\mssd_\U}(u)^2\comma \quad u \in \Lip_b(\U(B_{r}), \mssd_{\U}) \comma
\end{align}
which shows
$$\Lip_b(\U(B_r), \mssd_\U) \subset  {\rm LIP}_b(\U(B_{r}), \mssd_{\U}) \cap \{u: \U(B_r) \to \R: \EE{\dUpsilon(B_{r})}{\QP^\eta_{r}}(u)<\infty\} \fstop$$
thus,  the form~\eqref{eq:VariousFormsC} is well-posed on~$\Lip_b(\U(B_r), \mssd_\U)$. 
\item A simple sufficient condition for \ref{ass:ConditionalClos} is 
$$\phi_{r}^{\eta, k}:=\frac{\diff \QP_{r}^\eta}{\diff \pi_{\mssm_{r}}}\Big|_{\U^{k}(B_{r})} \in C_b(\U^{k}(B_{r})) \quad r, k \in \N \fstop$$
In this case,  the closability of the form $\EE{\dUpsilon(B_{r})}{\QP^{\eta, k}_{r}}$  is the standard consequence of the Hamza-type argument by \cite{RoeWie85, Fuk97}, see for an accessible reference, e.g., \cite[pp.\ 44-45]{MaRoe90}. The closability of the form $\EE{\dUpsilon(B_{r})}{\QP^\eta_{r}}$ then follows as it is a countable sum of closable forms  $\EE{\dUpsilon(B_{r})}{\QP^{\eta, k}_{r}}$ along $k \in \N_0$ (see e.g., \cite[Prop.~3.7]{MaRoe90}). All examples we shall discuss in Section~\ref{sec: Exa} fall into this case. 
\end{enumerate}
\end{rem}
\begin{defs}[Conditioned form]\label{def:CNDF}
Under~\ref{ass:CE} and~\ref{ass:ConditionalClos}, 
the closure of \eqref{eq:VariousFormsC} is called {\it conditioned form} and denoted by
\begin{equation} \label{eq: condF}
\ttonde{\EE{\dUpsilon(B_r)}{\QP^\eta_{r}},\dom{\EE{\dUpsilon(B_r)}{\QP^\eta_{r}}}} \fstop
\end{equation}
The corresponding $L^2(\QP^\eta_r)$-resolvent operator and the $L^2(\QP^\eta_r)$-semigroup are denoted respectively by 
$$\bigl\{G_{\alpha}^{\U(B_r), \QP^\eta_r}\bigr\}_{\alpha > 0} \quad \text{and} \quad \bigl\{T^{\U(B_r), \QP^\eta_r}_t\bigr\}_{t > 0}\fstop$$ 
The square field~$\cdc^{\dUpsilon(B_r)}$ naturally extends to the domain $\dom{\EE{\dUpsilon(B_r)}{\QP^\eta_{r}}}$, which is denoted by the same symbol~$\cdc^{\dUpsilon(B_r)}$.
\smallskip
\end{defs}

\subsection{Truncated Dirichlet forms}
In this subsection, we construct the truncated Dirichlet form on~$\U$.
 We start this section by giving an operator mapping functions on $\U$ to functions on~$\R^n$.
\begin{defs}[{\cite[Lem.~1.2]{MaRoe00}, see also \cite[Lem.~2.16]{LzDSSuz21}}]\ 
For~$u: \U \to \R$, define~$\mathcal U_{\gamma, x}(u): \R^n \to \R$ by 
\begin{align} \label{d:UO}
\mathcal U_{\gamma, x}(u)(y):=u\ttonde{\car_{X\setminus\set{x}}\cdot\gamma + \delta_y}-u\ttonde{\car_{X\setminus\set{x}}\cdot\gamma} \comma \quad \gamma \in \U,\quad x \in \gamma \fstop
\end{align}
\end{defs}

We now define a square field operator on $\U$ truncated up to particles inside $B_r$.
\begin{defs}[Truncated square field on $\U$]\label{d:DT}
 The following operator is called {\it the truncated square field} $\Gamma^\U_r$ whenever $\nabla \mathcal U_{\gamma, x}(u)|_{B_r}$ makes sense $\mssm_r$-a.e.~ for $u:\U \to \R$:
\begin{equation}\label{eq:d:LiftCdCRep}
\begin{gathered}
\cdc^{\dUpsilon}_r(u)(\gamma):= \sum_{x\in \gamma_{B_r}} |\nabla \mathcal U_{\gamma, x}(u)|^2(x)\fstop
\end{gathered}
\end{equation}
Thanks to~Lem.~\ref{l:WDG}, 
Formula~\eqref{eq:d:LiftCdCRep}  is well-defined for $\QP$-a.e.~$\gamma$. Indeed, 
as the weak gradient~$\nabla \mathcal U_{\gamma, x}(u)$ is well-defined pointwise on a measurable set $\Sigma \subset B_r$ with $\mssm_r(\Sigma^c)=0$, by applying  Lem.~\ref{l:WDG}, Formula~\eqref{eq:d:LiftCdCRep} is well-defined on a set~$\Omega(r)$ of $\QP$-full measure. 
\end{defs} 

Based on the truncated square field $\cdc^\U_r$, we introduce the truncated form on $\U$ defined on a certain core. 
\begin{defs}[Core]\label{d:core}Let $\{\mathcal C_r\}_{r \in \N}$ be a sequence of  algebras of $\mu$-classes of measurable functions so that $\mathcal C_r \supset \mathcal C_{r'}$ for $r \le r'$ and the following hold for every $r \in \N$:
\begin{enumerate}[$(a)$]
\item\label{i:d:core1} $\mathcal C_r \subset L^\infty(\mu)$ and $\mathcal C_r \subset L^2(\QP)$ is dense;  
\item\label{i:d:core2} 
$\cdc^{\U}_r(u)$ is well-defined $\QP$-a.e.~for every~$u \in \mathcal C_r$;

\item\label{i:d:core3}   the following integral is well-defined and finite for every $u \in \mathcal C_r $:
\begin{align} \label{eq:VariousFormsA} 
 \text{$u_{r}^\eta \in \dom{\E^{\U(B_r), \QP_r^\eta}}$ $\quad \QP$-a.e.~$\eta$} \comma \qquad \E^{\U, \mu}_r(u):= \int_\dUpsilon  \E^{\U(B_r), \mu_r^\eta}(u_{r}^\eta) \diff\QP(\eta) <\infty \comma
\end{align} 
and $(\E^{\U, \mu}_r, \mathcal C_r)$ is Markovian.

%
%
\end{enumerate}
\end{defs}  
\begin{ese}\label{ese:CC}
We have several choices of $\{\mathcal C_r\}_{r \in \N}$. In each of the following examples, we take a certain common core $\mathcal C$ and take $\mathcal C_r = \mathcal C$ for every $r>0$. 
\begin{enumerate}[(a)]
\item {\it Cylinder functions}. Take $\mathcal C_r=\mathcal C=\Cyl{D}$ for every~$r \in \N$, where $\Cyl{D}$ is the space of cylinder functions defined as 
\begin{align} \label{defn: cyl}
\CYL\eqdef \set{\begin{matrix}  u\colon \dUpsilon\rar \R :  u=F\bigl(\gamma(f_1), \gamma,(f_2), \ldots, \gamma(f_k)\bigr) \comma  F\in \mcC^\infty_b(\R^k)\comma \\  f_1,\dotsc,  f_k\in  C_0^\infty(\R^n)\comma\quad k\in \N_0 \end{matrix}}\comma
\end{align}
where $\gamma(f):=\int_{\R^n} f \diff \gamma$.
We say that~$\QP$ satisfies \ref{ass:Mmu} if the intensity measure $\mssm_\QP$ is locally finite intensity, viz.\
\begin{equation}\tag*{$(\mssm_\QP)$}\label{ass:Mmu}
\mssm_\QP E:=\int_{\U} \gamma(E) \diff \QP(\gamma)<\infty \comma \quad E \subset \R^n\ \text{compact}\fstop
\end{equation}
Under~\ref{ass:Mmu},~\ref{ass:CE} and~\ref{ass:ConditionalClos},  all the conditions of Dfn.~\ref{d:core} are satisfied (see~\cite[Lem.~2.15, Prop.~3.45, Thm.~3.48]{LzDSSuz21}).
\item {\it Lipschitz functions}. Take $\mathcal C_r =\mathcal C$  for every $r \in \N$, where $\mathcal C$ is equal to either
\begin{align} \label{c:COC}
\Lip_b(\bar{\mssd}_\U, \QP)\comma   \Lip_b(\mssd_\U, \QP)\comma   \Lip_b(\bar{\mssd}_\U, \QP) \cap C_b(\tau_\mrmv)\comma \text{or}  \ \   \Lip_b(\mssd_\U, \QP) \cap C_b(\tau_\mrmv) \fstop
\end{align} 
As $\Lip_b(\mssd_\U, \QP) \subset L^2(\mu)$ is dense (e.g., \cite[Prop.\ 4.1]{AmbGigSav14}) and $\Lip_b({\mssd}_\U, \QP) \subset \Lip_b(\bar{\mssd}_\U, \QP)$ by~\eqref{e:LLR}, $\Lip_b(\bar{\mssd}_\U, \QP) \subset L^2(\QP)$ is dense as well. The density of $\Lip_b({\mssd}_\U, \QP) \cap C_b(\tau_\mrmv)$ follows e.g., by~\cite[Lem.~2.27]{Sav19} combined with the fact \cite[Prop.~4.30]{LzDSSuz21} that $(\U, \mssd_\U, \tau_\mrmv)$ is an extended metric-topological space. This therefore implies the density of~$\Lip_b(\bar{\mssd}_\U, \QP) \cap C_b(\tau_\mrmv)$ as well. 
Thanks to the Lipschitz contraction property of $(\cdot)_r^\eta$ by Lem.~\ref{l:SEF3} and of $\mathcal U_{\gamma, x}$ (\cite[Lem.~4.1]{Suz22b}) and by \eqref{e:RDMF2}, the formula~\eqref{eq:VariousFormsA} readily follows. The Markov property follows by the Markov property of $(\E^{\U(B_r), \QP_r^\eta}, \dom{\E^{\U(B_r), \QP_r^\eta}})$ and \eqref{eq:VariousFormsA}. Thus, all the conditions of Dfn.~\ref{d:core} are satisfied under~\ref{ass:CE} and~\ref{ass:ConditionalClos}.

\item {\it $C^1$-local functions (e.g.\ \cite[Dfn.~II.8]{ChoParYoo98})}. Let $\Omega^*:=\{(x, \gamma) \in \R^n \times \U: x \in \gamma\}$ and we equip $\Omega^*$ with the relative topology of the product topology in $\R^n \times \U$.  Let $C^1_b(\U)$ be defined as the space of bounded $\tau_\mrmv$-continuous functions $u$ satisfying
\begin{enumerate}[(i)]
\item the map $y \mapsto \mathcal U_{\gamma, x}(u)(y)$  is differentiable at $x$ for every $(x, \gamma) \in \Omega^*$. 
\item the map $\Omega^* \ni (x, \gamma) \mapsto \nabla \mathcal U_{\gamma, x}(x)$ is continuous.
\end{enumerate}
A function $u:\U \to \R$ is called {\it local} if $u$ is $\sigma(\pr_{B_r})$-measurable for some~$r>0$, where~$\sigma(\pr_{B_r})$ is the $\sigma$-algebra generated by the map $\pr_{B_r}$. Define
\begin{align} \label{d:LFC1}
C^1_{b, loc}(\U):=\{u \in C^1_b(\U): \ \text{$u$ is local}\comma \limsup_{r \to \infty}\E^{\U, \mu}_r(u)<\infty\} \fstop 
\end{align}
Assume~\ref{ass:Mmu},~\ref{ass:CE} and~\ref{ass:ConditionalClos}, and take~$\mathcal C_r=\mathcal C=C^1_{b, loc}(\U)$ for every $r \in \N$. 
 Then all the conditions of Dfn.~\ref{d:core} are satisfied.
\end{enumerate}
\end{ese}

The following proposition relates the two square fields $\cdc^{\U}_r$ and $\cdc^{\U(B_r)}$. 
\begin{prop}[Truncated form {cf.~\cite[Prop.~4.7]{Suz22b}}]
\label{t:ClosabilitySecond} Assume~\ref{ass:CE} and~\ref{ass:ConditionalClos} and take $\{\mathcal C_r\}_{r \in \N}$ as in Dfn.~\ref{d:core}. 
The following relations hold for~$u \in \mathcal C_r$ for every $r\in \N$: 
\begin{align}\label{eq:p:MarginalWP:0}
& \cdc^{\dUpsilon(B_r)}(u_{r}^\eta)(\gamma) = \cdc^{\dUpsilon}_r(u)(\gamma+\eta_{B_r^c}) \comma \quad \text{$\QP$-a.e.~$\eta$, \ $\mu_{r}^\eta$-a.e.~$\gamma \in \U(B_r)$}  \comma
\\
 &\E^{\U, \mu}_r(u):= \int_\dUpsilon  \E^{\U(B_r), \mu_r^\eta}(u_{r}^\eta) \diff\QP(\eta) = \int_\dUpsilon   \cdc^{\dUpsilon}_r(u) \diff\QP \fstop \notag
\end{align}
As a consequence, the form $(\E^{\U, \QP}_{r}, \mathcal C_r)$ is a densely defined closable Markovian form and 
the closure~$(\E_r^{\U, \QP}, \dom{\E_r^{\U, \QP}})$ is a local Dirichlet form on~$L^2(\mu)$. The $L^2(\QP)$-semigroup and resolvent corresponding to $(\E_{r}^{\U, \mu}, \dom{\E_{r}^{\U, \mu}})$ is denoted by $\sem{T_{r, t}^{\U, \QP}}$ and $\{G_{r, \alpha}^{\dUpsilon, \QP}\}_{\alpha > 0}$ respectively. 

Furthermore, if $\Lip_b(\bar{\mssd}_\U, \QP)\subset \mathcal C_r$, then 
\begin{align}\label{e:RD}
 \cdc^\U_r(u) \le \Lip_{\bar{\mssd}_\U}(u)^2\comma \quad u \in \Lip_b(\bar{\mssd}_\U, \QP) \fstop
\end{align}
\end{prop}

\begin{proof}
Although the idea of the proof is similar to~\cite[Prop.~4.7]{Suz22b}, the core chosen there is different from the core $\mathcal C_r$ here. We therefore give the proof below for the sake of completeness. 

We first prove~\eqref{eq:p:MarginalWP:0}. As the second line of~\eqref{eq:p:MarginalWP:0} is an immediate consequence of the first line and the disintegration formula~\eqref{p:ConditionalIntegration}, we only give the proof of the first line of ~\eqref{eq:p:MarginalWP:0}. Let $u \in \mathcal C_r$. 
Then, the RHS of~\eqref{eq:p:MarginalWP:0} is well-defined on a measurable set~$\Omega$ of $\QP$-full measure by~\ref{i:d:core2} in~Dfn.~\ref{d:core}. 
Let $\Omega_r^\eta$ be the section as defined in~\eqref{e:SEF2}, which is of $\QP_r^\eta$-full measure for $\QP$-a.e.~$\eta \in \Omega$ by~\eqref{p:ConditionalIntegration2}.  
As $\mu_r^\eta$ is absolutely continuous with respect to the Poisson measure~$\pi_{\mssm_r}$ by \ref{ass:CE} and the Poisson measure does not have multiple points almost everywhere, we may assume that every~$\gamma \in \Omega_r^\eta$ does not have multiple points, i.e., $\gamma(\{x\}) \in \{0, 1\}$ for every $x \in B_r$. 
Let $\gamma \in \Omega_r^\eta \cap \U^k(B_r)$. Then, 
\begin{align*}
\cdc^\dUpsilon_r(u) (\gamma+\eta_{B_r^\complement})
&= \sum_{x\in\gamma} \Bigl|\nabla \Bigl( u\tparen{\car_{X\setminus \set{x}} \cdot(\gamma+\eta_{B_r^c})+\delta_\bullet}-u\tparen{\car_{X\setminus\set{x}}\cdot(\gamma+\eta_{B_r^c})} \Bigr)\Bigl|^2(x) 
\\
&=\sum_{x\in\gamma}  \Bigl|\nabla \Bigl( u_{r}^{\eta}\tparen{\car_{X\setminus \set{x}} \cdot\gamma+\delta_\bullet}-u^\eta_{r}\tparen{\car_{X\setminus\set{x}}\cdot\gamma} \Bigr)\Bigr|^2(x) 
\\
&=  \sum_{x\in\gamma} \bigl|\nabla u_{r}^{\eta}\tparen{\car_{X\setminus \set{x}} \cdot\gamma+\delta_\bullet}\bigr|^2(x)
\\
&=  \Bigl|\nabla^{\odot k} \bigl(u_r^\eta \bigr)\Bigr|^2(\gamma)
\\
&=  \cdc^{\dUpsilon(B_r)}(u_{r}^\eta)(\gamma)
\end{align*}
where the first equality is the definition of the square field~$\cdc^\dUpsilon_r$;  the third equality holds as $u^\eta_{r}\tparen{\car_{X\setminus\set{x}}\cdot\gamma}$ does not depend on the variable denoted as~$\bullet$ on which the weak gradient~$\nabla$ operates; the fourth equality followed from the definition of the symmetric gradient operator $\nabla^{\odot k}$, for which we used the fact that $\gamma \in \Omega_r^\eta$ does not have multiple points. 
As this argument holds for arbitrary $k \in \N_0$, \eqref{eq:p:MarginalWP:0} has been shown. 
The local property follows immediately by~\eqref{eq:p:MarginalWP:0} and the local property of $(\E_r^{\U(B_r), \QP_r^\eta}, \dom{\E_r^{\U(B_r), \QP_r^\eta}})$.
The Markov property of~$(\E_{r}^{\U, \mu}, \mathcal C_r)$ follows by~\ref{i:d:core3}. 

We now show the closability. Noting that $\E^{\U(B_r), \QP_r^\eta}$ is closable for $\mu$-a.e.~$\eta$ by~\ref{ass:ConditionalClos},  the superposition form~$(\bar{\E}^{\U, \QP}_r,\dom{\bar{\E}^{\U, \QP}_r})$, which shall be defined below in Dfn.~\ref{d:SPF}, is closed by \cite[Prop.~V.3.1.1]{BouHir91}. As the two forms~$(\E_{r}^{\U, \mu}, \mathcal C_r)$ and~$(\bar{\E}^{\U, \QP}_r,\dom{\bar{\E}^{\U, \QP}_r})$ coincide on $\mathcal C_r$ by definition and $\mathcal C_r\subset \dom{\bar{\E}^{\U, \QP}_r}$ by construction, the closability of~$(\E_{r}^{\U, \mu}, \mathcal C_r)$ is inherited from the closedness of the superposition form $(\bar{\E}^{\U, \QP}_r,\dom{\bar{\E}^{\U, \QP}_r})$. As $\mathcal C_r \subset L^2(\QP)$ is dense by \ref{i:d:core1}, the form~$(\E_{r}^{\U, \mu}, \mathcal C_r)$ is densely defined. As the Markov property extends to the closure (e.g., \cite[Thm.~3.1.1]{FukOshTak11}),  the form $(\E_{r}^{\U, \mu}, \dom{\E_{r}^{\U, \mu}})$ is Markovian as well.  

We now prove~\eqref{e:RD}.  
By the Rademacher-type property of~$\E^{\U(B_r), \QP_r^{k, \eta}}$, 
we have that
\begin{align} \label{e:RD2}
\cdc^{\U(B_r)}(u) \le \Lip_{\mssd_\U}(u)^2  \comma \quad  u \in \Lip(\U(B_r), \mssd_\U) \quad r>0\fstop
\end{align}
In view of the relation between~$\cdc^\U_r$ and~$\cdc^{\U(B_r)}$ in~\eqref{eq:p:MarginalWP:0} and the Lipschitz contraction~\eqref{e:SEF3} of the operator~$(\cdot)_r^\eta$, we concluded~\eqref{e:RD}. 
 The proof is complete.
\end{proof}

\subsection{Superposition form}
\begin{defs}[Superposition Dirichlet form,~e.g., {\cite[Prop.\ V.3.1.1]{BouHir91}}] \label{d:SPF} Assume~\ref{ass:CE} and~\ref{ass:ConditionalClos}.
\begin{align} \label{eq:SP} 
\mathcal D(\bar{\E}^{\U, \mu}_r) &:= \biggl\{u \in L^2(\QP): \ \int_\dUpsilon  \E^{\U(B_r), \QP_r^\eta}(u_{r}^\eta) \diff\QP(\eta)<\infty  \biggr\} \comma
\\
\bar{\E}^{\U, \QP}_r(u) &:=\int_\dUpsilon  \E^{\U(B_r), \QP_r^\eta}(u_{r}^\eta) \diff\QP(\eta) \fstop \notag
\end{align}
It is known that $(\bar{\E}^{\U, \QP}_r,\dom{\bar{\E}^{\U, \QP}_r})$ is a Dirichlet form on~$L^2(\mu)$ \cite[Prop.\ V.3.1.1]{BouHir91}.  The $L^2(\QP)$-semigroup and the infinitesimal generator corresponding to $(\bar{\E}^{\U, \QP}_r,\dom{\bar{\E}^{\U, \QP}_r})$  are denoted by $\sem{\bar{T}_{r, t}^{\U,\QP}}$ and $(\bar{A}_r^{\U, \QP}, \dom{\bar{A}_r^{\U, \QP}})$ respectively. 
\end{defs}
The resolvent $\reso{\bar{G}_{r, \alpha}^{\U, \QP}}$ and the semigroup $\sem{\bar{T}_{r, t}^{\U, \QP}}$ corresponding to the superposition form $\bar{\E}_r^{\U, \QP}$ can be obtained as the superposition of the resolvent~$\reso{G^{\U(B_r), \QP_r^\eta}_{\alpha}}$ and the semigroup $\sem{T^{\U(B_r), \QP_r^\eta}_{t}}$ associated with the form~$\E^{\U(B_r), \QP_r^\eta}$. The following proposition shows that the semigroup (resp. resolvent) corresponding to the superposition form is identified with the superposition of the semigroup (resp.~resolvent), which has been proved by~\cite{LzDS20} in a general framework. 
\begin{prop}[{\cite[(iii) Prop.~2.13]{LzDS20}}]\label{prop: 1-1}
	Assume~\ref{ass:CE} and~\ref{ass:ConditionalClos}.
The following holds: 
	\begin{align} \label{eq: R-1}
		\bar{G}_{r,\alpha}^{\U, \QP}u(\gamma)  = G^{\U(B_r), \QP_r^\gamma}_{\alpha} u_{r}^{\gamma}(\gamma_{B_r})  \comma \quad \bar{T}_{r,t}^{\U, \QP}u(\gamma)  = T^{\U(B_r), \QP_r^\gamma}_{t} u_{r}^{\gamma}(\gamma_{B_r}) \, ,
	\end{align}
	for $\QP$-a.e.\ $\gamma\in \dUpsilon$, every $t>0$.
\end{prop}
\begin{rem}
The proof of \cite[(iii) Prop.~2.13]{LzDS20} has been given in terms of direct integral. As the measure $\QP_r^\eta$ can be identified to the conditional probability~$\QP(\cdot\ |\ \cdot_{B_r^c}=\eta_{B_r^c})$ by a bi-measure-preserving isomorphism as remarked in~\eqref{r:BMP}, our setting can be identified with a particular case of direct integrals in~\cite{LzDS20}.
\end{rem}

As the former form is constructed as the smallest closed extension of $(\E_r^{\U, \QP}, \mathcal C_r)$, it is clear by definition that 
$$\E_r^{\U, \QP}=\bar{\E}_r^{\U, \QP} \quad \text{on} \quad \mathcal C_r \comma \quad \dom{\E_r^{\U, \QP}} \subset \dom{\bar{\E}_r^{\U, \QP}} \fstop$$ 
\begin{ass}\label{t:S=M} We call~\ref{ass:Dom} if 
\begin{align}\tag*{$(\mathsf{D})_{\ref{t:S=M}}$}\label{ass:Dom} 
(\E_r^{\U, \QP}, \dom{\E_r^{\U, \QP}}) = (\bar{\E}_r^{\U, \QP}, \dom{\bar{\E}_r^{\U, \QP}}) \quad r \in \N\fstop
\end{align}
\end{ass}
\begin{rem}\ 
\begin{enumerate}[(i)]
\item For a suitable choice of $\{C_r\}_{r \in \N}$, Assumption~\ref{ass:Dom} has been verified for a Dirichlet form whose invariant measure is ${\sf sine}_\beta$ for every $\beta>0$, see \cite[Thm.~4.11]{Suz22b};
\item \ref{ass:Dom} will be used only for (ii) in Thm.~\ref{thm: Erg} in this paper.
\end{enumerate} 
\end{rem}

Under Assumption~\ref{t:S=M}, Prop.~\ref{prop: 1-1} provides the superposition formula for the resolvent~$\reso{G_{r, \alpha}^{\U, \QP}}$ and the semigroup $\sem{T_{r, t}^{\U, \QP}}$ in terms of the resolvent~$\reso{G_\alpha^{\U(B_r), \QP_r^\eta}}$ and the semigroup $\sem{T_t^{\U(B_r), \QP_r^\eta}}$ respectively. 
\begin{cor}[Coincidence of semigroups]\label{prop: 1} Assume~\ref{ass:CE}, ~\ref{ass:ConditionalClos} and \ref{ass:Dom}. 
The following three operators coincide: 
	\begin{align} \label{eq: R-1}
		&G_{r, \alpha}^{\U, \QP}u(\gamma)  = \bar{G}_{r, \alpha}^{\U, \QP}u(\gamma) =G^{\U(B_r), \QP_r^\gamma}_{\alpha} u_{r}^\gamma(\gamma_{B_r}),
		\\
		&T_{r, t}^{\U, \QP}u(\gamma)  = \bar{T}_{r, t}^{\U, \QP}u(\gamma) =T^{\U(B_r), \QP_r^\gamma}_{t} u_{r}^\gamma(\gamma_{B_r}) \, ,
	\end{align}
	for $\QP$-a.e.\ $\gamma\in \dUpsilon$, every $t>0$.
\end{cor}

\subsection{Monotone limit form}
The following proposition follows immediately from the definitions of the square field~$\Gamma^{\U}_r$ and the core~$\mathcal C_r$.
\begin{prop}[Monotonicity]\label{p:mono}
Assume~\ref{ass:CE} and~\ref{ass:ConditionalClos}.
The form $(\E^{\U, \QP}_r, \dom{\E^{\U, \QP}_r}$ and the square field $\Gamma^\U_r$ are monotone increasing as $r \uparrow \infty$, viz., 
$$\cdc^\U_r(u) \le \cdc^\U_s(u)\comma \quad \E^{\U, \QP}_r(u) \le \E^{\U, \QP}_s(u) \comma \quad \dom{\E^{\U, \QP}_s} \subset \dom{\E^{\U, \QP}_r} \quad r\le s \fstop$$
\end{prop}
\begin{proof}
As $\mathcal C_r$ is a core of the form~$(\E^{\U, \QP}_r, \dom{\E^{\U, \QP}_r})$ and $\mathcal C_s \subset \mathcal C_r$ by Dfn.~\ref{d:core}, it suffices to check~$\cdc^\U_r(u) \le \cdc^\U_s(u)$ on~$\mathcal C_s$, which is a immediate consequence of the definition~\eqref{eq:d:LiftCdCRep}. The proof is complete.
\end{proof}

\begin{defs}[Monotone limit form]
The form $(\E^{\U, \QP}, \dom{\E^{\U, \QP}})$ is defined as the monotone limit:
\begin{align} \label{eq:Temptation} 
\dom{\E^{\U, \QP}}&:=\{u \in \cap_{r>0} \dom{\E^{\U, \QP}_r}: \E^{\U, \QP}(u) = \lim_{r \to \infty}\E^{\U, \QP}_r(u) <\infty\} \comma
\\
\E^{\U, \QP}(u)&:=\lim_{r \to \infty}\E^{\U, \QP}_r(u) \fstop \notag
\end{align} 
The form $(\E^{\U, \QP}, \dom{\E^{\U, \QP}})$ is a Dirichlet form on~$L^2(\mu)$ as it is the monotone limit of Dirichlet forms (e.g., by \cite[Exercise 3.9]{MaRoe90}).
Note that the limit form does not depend on the choice of the exhaustion $\seq{B_r}_{r \in \N}$.
The square field~$\cdc^\U$ is defined as the monotone limit of~$\cdc^\U_r$ as well:
\begin{align} \label{d:SF}
\cdc^\U(u):=\lim_{r \to \infty}\cdc^\U_r(u) \quad u \in \dom{\E^{\U, \QP}}\fstop
\end{align}
 

%
%

\end{defs}

We now show that the form~$(\E^{\U, \QP}, \dom{\E^{\U, \QP}})$ is a local Dirichlet form on~$L^2(\QP)$ and satisfies the Rademacher-type property  with respect to the $L^2$-transportation-type distance~$\bar{\mssd}_\U$.
\begin{prop}\label{p:DF} Assume~\ref{ass:CE} and~\ref{ass:ConditionalClos}.
The form $(\E^{\U, \QP}, \dom{\E^{\U, \QP}})$ is a
local Dirichlet form on~$L^2(\QP)$.
Furthermore,  if $\Lip_b(\bar{\mssd}_\U, \QP) \subset \mathcal C_r$ for every~$r \in \N$, then $(\E^{\U, \QP}, \dom{\E^{\U, \QP}})$ satisfies Rademacher-type property:
\begin{align}\label{p:Rad1}
\Lip_b(\bar{\mssd}_\U, \QP) \subset \dom{\E^{\U, \QP}}\comma \quad \cdc^\U(u) \le \Lip_{\bar{\mssd}_\U}(u)^2 \fstop
\end{align}
\end{prop}
\proof
The local property of $(\E^{\U, \QP}, \dom{\E^{\U, \QP}})$ follows from \eqref{d:SF}. 
We show the Rademacher-type property. 
Since~$\cdc^\U$ is the limit square field of~$\cdc^{\U}_r$ as in~\eqref{d:SF}, it suffices to show 
\begin{align*} 
 \cdc^\U_r(u) \le \Lip_{\bar{\mssd}_\U}(u)^2  \comma\quad u \in \Lip(\bar{\mssd}_\U, \QP) \quad r>0\comma
\end{align*}
which has been already proven in Prop.~\ref{t:ClosabilitySecond}. We verified~\ref{p:Rad}. 
The proof is complete.
\qed

The $L^2$-resolvent operators and the $L^2$-semigroups corresponding to the form~\eqref{eq:VariousFormsA} and the form~\eqref{eq:Temptation} are denoted respectively by 
$$\bigl\{G_{r, \alpha}^{\dUpsilon, \QP}\bigr\}_{\alpha>0},\ \bigl\{T^{\dUpsilon, \QP}_{r, t}\bigr\}_{t >0} \quad \text{and} \quad \bigl\{G_{\alpha}^{\dUpsilon, \QP}\bigr\}_{\alpha>0}, \ \bigl\{T^{\dUpsilon, \QP}_t\bigr\}_{t >0} \fstop$$ 

\begin{prop}\label{prop: MGS} Assume~\ref{ass:CE} and~\ref{ass:ConditionalClos}. 
The semigroup $\{T^{\U, \QP}_{t}\}_{t \ge 0}$ is the $L^2(\QP)$-strong operator limit of the semigroups $\{T^{\U, \QP}_{r, t}\}_{t \ge 0}$, viz., 
$$\text{{\small $L^2(\mu)$--}}\lim_{r \to \infty} G^{\U, \QP}_{r, \alpha} u= G^{\U, \QP}_\alpha u, \quad \text{{\small $L^2(\mu)$--}}\lim_{r \to \infty} T^{\U, \QP}_{r, t} u= T^{\U, \QP}_t u  \quad u \in  L^2(\QP)\comma \quad t>0 \fstop$$
\end{prop}
\begin{proof}
The statement follows from the monotonicity of~$(\E^{\U, \QP}_r, \dom{\E^{\U, \QP}_r}$ as $r\uparrow\infty$ proven in~Prop.~\ref{p:mono}
and~\cite[S.14, p.373]{ReeSim80}. 
\end{proof}

\subsection{Quasi-regularity}\label{subsec: ES}In this subsection, we discuss a sufficient condition for the quasi-regularity. 
\begin{ass}[Quasi-regularity]\label{a:QR}Let $\mathcal F^{\U, \QP} \subset  \dom{\E^{\U, \QP}}$ be any closed Markovian subspace. We call~\ref{ass:QR} for~$\mathcal F^{\U, \QP}$ if 
\begin{align}\tag*{$(\mathsf{QR})_{\ref{a:QR}}$}\label{ass:QR}
\text{$(\E^{\U, \QP}, \mathcal F^{\U, \QP})$ is quasi-regular in $(\U, \tau_\mrmv)$} \fstop
\end{align} 
\end{ass}

In the following, we introduce another monotone limit form having a (possibly) smaller domain. 
\begin{prop}[smaller domain]\label{p:DF2} Assume~\ref{ass:CE} and~\ref{ass:ConditionalClos}. Let $\seq{\mathcal C_r}_{r \in \N}$ be a sequence of algebras in~Dfn.~\ref{d:core}. Then, the form $(\E^{\U, \QP}, \mathcal C)$ defined as 
\begin{align} \label{eq:Temptation2} 
\mathcal C&:=\{u \in \cap_{r \in \N} \mathcal C_r: \lim_{r \to \infty}\E^{\U, \QP}_r(u) <\infty\} \comma
\\
\E^{\U, \QP}(u)&:=\lim_{r \to \infty}\E^{\U, \QP}_r(u) \comma\notag
\end{align} 
is closable. Let $(\E^{\U, \QP}, \mathcal F^{\U, \QP})$ be the closure~$(\E^{\U, \QP}, \overline{\mathcal C})$. Then,  $(\E^{\U, \QP}, \mathcal F^{\U, \QP})$ is a local Dirichlet form on~$L^2(\QP)$. Furthermore,  if either of the following holds for every $r \in\N$
$$\Lip_b(\bar{\mssd}_\U, \QP) \subset \mathcal C_r\comma \quad \Lip_b({\mssd}_\U, \QP) \subset \mathcal C_r\comma \quad \Lip_b(\bar{\mssd}_\U, \tau_\mrmv) \subset \mathcal C_r\comma \quad \Lip_b({\mssd}_\U, \tau_\mrmv) \subset \mathcal C_r \comma$$
then $(\E^{\U, \QP}, \mathcal F^{\U, \QP})$ satisfies Rademacher-type property 
\begin{align}\tag*{$\mathsf{(Rad_{\bar{\mssd}_\U, \QP})}_{\ref{p:DF2}}$}\label{p:Rad}
\Lip_b(\bar{\mssd}_\U, \QP) \subset \mathcal F^{\U, \QP}\comma \quad \cdc^\U(u) \le \Lip_{\bar{\mssd}_\U}(u)^2 \comma
\end{align}
$($resp.~$\mathsf{(Rad_{{\mssd}_\U, \QP})}$, $\mathsf{(Rad_{\bar{\mssd}_\U, \tau_\mrmv})}$ and $\mathsf{(Rad_{{\mssd}_\U, \tau_\mrmv})}$$)$.
\end{prop}
\begin{proof}
As $\mathcal C \subset \dom{\E^{\U, \QP}}$ by definition, the closability of $\mathcal C$ follows by the closedness of $\dom{\E^{\U, \QP}}$ proven in Prop.~\ref{p:DF}. The local property of $\mathcal F^{\U, \QP}$ is inherited from $\dom{\E^{\U, \QP}}$. The Markov property of~$(\E^{\U, \QP}, \mathcal C)$ follows by the Markov property of $(\E^{\U, \QP}_r, \mathcal C_r)$ and \eqref{eq:Temptation2}.  As the Markov property is inherited to the closure by e.g.,~\cite[Thm.~3.1.1]{FukOshTak11}, we concluded that $\mathcal F^{\U, \QP}$ is Markovian. The rest of the arguments follows by the same proofs as in Prop.~\ref{p:DF}.
\end{proof}

\begin{cor}\label{cor:SDR}
Assume~\ref{ass:CE} and~\ref{ass:ConditionalClos}. 
If $\mathcal C_r=\mathcal C$ $(r \in \N)$ is either one of the following:
\begin{align} \label{e:CCL2}
\mathcal C =  \Lip_b(\bar{\mssd}_\U, \tau_\mrmv)  \quad \text{or} \quad \mathcal C =  \Lip_b({\mssd}_\U, \tau_\mrmv) \comma 
\end{align}
and  $\mathcal F^{\U, \QP}$ is the closure $\overline{\mathcal C}$, then $(\EE{\dUpsilon}{\QP}, \mathcal F^{\U, \QP})$ is a quasi-regular local Dirichlet form. 
\end{cor}
\begin{proof}
First of all, the form is closable on~$\mathcal C$ by Prop.~\ref{p:DF2} and the fact that $\mathcal C$ satisfies Dfn.~\ref{d:core} as seen in (b) Example~\ref{ese:CC}. Furthermore, $(\EE{\dUpsilon}{\QP}, \mathcal F^{\U, \QP})$ is a local Dirichlet form and
the Rademacher-type property $\Lip_b(\bar{\mssd}_\U, \tau_\mrmv)$ (resp.~$\Lip_b({\mssd}_\U, \tau_\mrmv))$ holds by Prop.~\ref{p:DF2}. 
Thus, we conclude the quasi-regularity \ref{ass:QR} by the proof of \cite[Cor.\ 6.3]{LzDSSuz21}.
\end{proof}
\begin{rem}[A different core] \label{r:DC}
Another sufficient condition for~\ref{ass:QR} has been studied in~\cite[Thm.~1]{Osa96} by taking a core $\mathcal C_r=\mathscr D_\infty$ in Dfn.~\ref{d:core}, where $\mathscr D_\infty$ is a space of smooth local functions (see, \cite[(0.3)]{Osa96}) and take the domain to be the closure of $\mathscr D_\infty$. We note that functions in the core $\mathcal C$ in \eqref{e:CCL2} are not necessarily local functions. The domain $\mathcal F^{\U, \QP}$ defined as the closure of $\mathcal C$ in Cor.~\ref{cor:SDR} is therefore not necessarily the same as the domain constructed as the closure of $\mathscr D_\infty$ in~\cite{Osa96}.  
\end{rem}

\section{Tail-triviality, Finiteness of $\bar{\mssd}_\dUpsilon$ and Irreducibility} \label{sec: Irr}
\subsection{Irreducibility and Tail-triviality}
We recall that $\ttonde{\EE{\dUpsilon}{\QP}, \mathcal F^{\U, \QP}}$ is {\it irreducible} if $\EE{\dUpsilon}{\QP}(u)=0$ for $u \in  \mathcal F^{\U, \QP}$ implies that $u$ is constant $\QP$-almost everywhere. The following definition corresponds to the irreducibility of the conditioned form~\eqref{eq: condF}. 
Let $\seq{B_{r}}$ be a comapct convex domain exhaustion in $\R^n$.
 \begin{defs}[Conditional irreducibility]\normalfont\label{ass:ConditionalErgodicity}
We say that {\it the conditional irreducibility} (in short: \ref{ass:ConditionalErg}) holds if,
for every $r \in \N$,  $\QP$-a.e.\ $\eta \in \dUpsilon$ and $k \in \mathcal K_r^\eta$, 
\begin{align*}\tag*{$(\mathsf{CI})_{\ref{ass:ConditionalErgodicity}}$}\label{ass:ConditionalErg}
\text{if $u \in \dom{\EE{\dUpsilon(B_r)}{\QP^\eta_{r}}}$ and $\EE{\dUpsilon(B_r)}{\QP^\eta_{r}}(u)=0$, then $u\mrestr{\dUpsilon^{k}(B_r)}=C_{r}^{\eta, k}$ \ $\QP^{\eta, k}_{r}$-a.e.} \comma 
\end{align*}
where $C_{r}^{\eta, k}$ is a constant depending on $r, \eta$ and $k$.
\end{defs}
\begin{rem}\ 
\begin{enumerate}[$(a)$]
\item In terms of the corresponding diffusion process,  Assumption~\ref{ass:ConditionalErg} can be understood as the ergodicity of the interacting {\it finite} particles in~$\dUpsilon(B_r)$ conditioned to be~$\eta_{B_r^c}$ outside~$B_r$. 
\item Assumption \ref{ass:ConditionalErg} can be verified for a wide class of invariant measures $\QP$ such as Gibbs measures including Ruelle measures,  and determinantal/permanental point processes including $\mathrm{sine}_\beta$, $\mathrm{Airy}_\beta$, $\mathrm{Bessel}_{\alpha, \beta}$, Ginibre, which will be discussed in \S \ref{sec: Exa}.
\end{enumerate}
\end{rem}

\smallskip
The main theorem of this section is the following:
\begin{thm}\label{thm: Erg} Let $\QP$ be a Borel probability measure on~$\dUpsilon$ satisfying~\ref{ass:CE} and~\ref{ass:ConditionalClos}, and $\mathcal F^{\U, \QP} \subset  \dom{\EE{\dUpsilon}{\QP}}$ be any closed Markovian subspace. 
\begin{enumerate}[$(i)$]
\item Suppose \ref{ass:Dom}.  Then 
$$ \text{$\ttonde{\EE{\dUpsilon}{\QP}, \dom{\EE{\dUpsilon}{\QP}}}$ is irreducible} \quad \implies \quad \text{ $\QP$ is tail trivial~\ref{ass:T}} $$
\item Suppose \ref{ass:SCE}, \ref{ass:ConditionalErg}, \ref{ass:QR} of~$\mathcal F^{\U, \QP}$ and \ref{ass:Rig}.
Then
$$ \text{ $\QP$ is tail trivial~\ref{ass:T}} \quad \implies \quad \text{$(\EE{\dUpsilon}{\QP}, \mathcal F^{\U, \QP})$ is irreducible} $$
\end{enumerate}
\end{thm}

In the following subsections, we give the proof of Theorem \ref{thm: Erg}.

\subsection{The proof of (i)}
Recall that $\ttonde{\EE{\dUpsilon}{\QP},\dom{\EE{\dUpsilon}{\QP}}}$ is irreducible if and only if $\{T^{\dUpsilon, \QP}_t\}_{t>0}$-invariant sets are trivial (see, e.g., \cite[Prop.\ 2.3 and Appendix]{AlbKonRoe97}), i.e., every $\Xi\subset \dUpsilon$ satisfying
 $$T^{\dUpsilon, \QP}_t (\1_{\Xi}u) = \1_{\Xi}T^{\dUpsilon, \QP}_t u,\quad  u \in L^2(\QP) $$
 satisfies either $\QP(\Xi)=1$ or $\QP(\Xi)=0$. Therefore,  it suffices to show that every set $\Xi \in \mathscr T(\dUpsilon)$ is $\{T^{\dUpsilon, \QP}_t\}_{t>0}$-invariant. 
By Proposition \ref{prop: MGS}, we obtain $T^{\dUpsilon, \QP}_{r, t}u \to T^{\dUpsilon, \QP}_{t}u$ in $L^2(\mu)$ as $r \uparrow \infty$  for every $u \in L^2(\mu)$. Thus, it suffices to show that
\begin{align} \label{eq: TS}
\text{every tail set $\Xi \in \mathscr T(\dUpsilon)$ is $\{T^{\dUpsilon, \QP}_{r, t}\}_{t>0}$-invariant for every $r>0$} \fstop
\end{align}
 Indeed, if it is true, then 
\begin{align*}
T^{\dUpsilon, \QP}_t\1_{\Xi}u = \text{{\small $L^2(\mu)$ --}}\lim_{r\to \infty} T^{\dUpsilon, \QP}_{r, t}\1_{\Xi}u = \text{{\small $L^2(\mu)$ --}}\lim_{r\to \infty} \1_{\Xi}T^{\dUpsilon, \QP}_{r, t}u = \1_{\Xi}T^{\dUpsilon, \QP}_tu \fstop
\end{align*}

We now show \eqref{eq: TS}. By~\eqref{eq: EXP0}, every set $\Xi \in \mathscr T(\dUpsilon)$ has the following expression:
\begin{align} \label{eq: EXP}
\Xi=\U(B_r) + {\rm pr}_{B_r^c}(\Xi), \quad \text{for {\it every}}\ r \in \N\fstop
\end{align}
By Proposition \ref{prop: 1}, for $u \in L^2(\QP)$, 
$$T^{\dUpsilon, \QP}_{r, t}u(\gamma) = T^{\U(B_r), \QP^\gamma_{r}}_t u_{r}^{\gamma}(\gamma_{B_r}), \quad \text{$\mu$-a.e.\ $\gamma$} \fstop$$
By \eqref{eq: EXP}, the function $(\1_{\Xi})_{r}^{\gamma} \equiv 1$ on $\dUpsilon(B_r)$ if and only if $\gamma \in \Xi$, 
which leads to  
$$T^{\U(B_r), \QP^\gamma_{r}}_t (u\1_{\Xi})_{r}^{\gamma}(\gamma_{B_r}) 
= \1_{\Xi}(\gamma)T^{\U(B_r), \QP^\gamma_{r}}_t u_{r}^{\gamma}(\gamma_{B_r}) \fstop$$
Therefore, for every $r \in \N$
 \begin{align*}
 T^{\dUpsilon, \QP}_{r, t}\1_{\Xi} u(\gamma) &=  T^{\U(B_r), \QP^\gamma_{r}}_t(\1_{\Xi}u)_{r}^{\gamma}(\gamma_{B_r}) = \1_\Xi T^{\U(B_r), \QP^\gamma_{r}}_tu_{r}^{ \gamma}(\gamma_{B_r}) = \1_{\Xi}T^{\dUpsilon, \QP}_{r, t} u(\gamma) \comma
 \end{align*}
for $\mu$-a.e.\ $\gamma$.
The proof of (i)  is complete.
\qed

\subsection{The proof of (ii)}
By the number rigidity~\ref{ass:Rig}, we can take a measurable set $\Omega_{\sf rig}^r \subset \U$ so that $\mu(\Omega_{\sf rig}^{r})=1$ and if~$\gamma, \eta \in \Omega_{\sf rig}^r$ with $\gamma_{B_r^c} = \eta_{B_r^c}$, then $\gamma(B_r) = \eta(B_r)$. Let $\Omega_{\sf rig} = \cap_{r \in \N} \Omega_{\sf rig}^{r}$, which is of $\QP$-full measure as well.  
Let $u \in \mathcal F^{\U, \QP}$ so that $ \cdc^{\dUpsilon}(u) =0$. By the monotonicity~\eqref{d:SF}, we have $\cdc^{\U}_r(u)=0$. 
By the formula~\eqref{eq:p:MarginalWP:0} and the same proof as \cite[Prop.\ 5.14]{LzDSSuz21},  for every $r \in \N$, there exists $\Omega^r_0 \subset \dUpsilon$ so that $\QP(\Omega^r_0)=1$ and 
$$\cdc^{\dUpsilon(B_{r})}(u_{r}^{\eta}) =0 \qquad \text{$\QP_{r}^\eta$-a.e.} \comma \quad \eta \in \Omega^r_0 \fstop$$
By Assumption~\ref{ass:ConditionalErg},  for $r \in \N$, there exists a measurable set $\Omega^r_{\sf rig, ic}\subset \Omega_0^r \cap \Omega_{\sf rig}$ of full $\mu$-measure so that, for every $\eta \in \Omega^r_{\sf rig, ic}$, there exists  $k=k(\eta) \in \N_0$ and a constant $C^{\eta, k}_{r}$ satisfying that 
\begin{align} \label{eq: E: erg0}
u_{r}^{\eta}\equiv C^{\eta, k}_{r} \qquad \QP_{r}^\eta\text{-a.e.} \fstop
\end{align} 
Note that the measure $\QP_{r}^\eta$ is fully supported on $\dUpsilon^{k(\eta)}(B_{r})$ by the number rigidity~\ref{ass:Rig} and~\ref{ass:SCE}.
Let $\Omega_{\sf rig, ic}:= \cap_{r \in \N}\Omega^r_{\sf rig, ic}$, which satisfies $\mu(\Omega_{\sf rig, ic})=1$. 

By the quasi-regularity~\ref{ass:QR}, there exists a quasi-continuous $\QP$-version $\tilde{u}$ of $u$ (see \cite[Prop.\ 3.3 in Chap.~IV]{MaRoe90}). 
Therefore, we can take a closed nest so that $\tilde{u}$ is $\T_\mrmv$-continuous on~$K_m$ for every~$m \in \N$.
Define~$\Omega_{\sf qc}:=\cup_{m \in \N} K_m$, which is of $\mu$-full measure since $\Omega_{\sf qc}^c$ is an exceptional set with respect to~$\ttonde{\EE{\dUpsilon}{\QP}, \mathcal F^{\U, \QP}}$.
Up to relabelling $K_m$, we may therefore assume that $\QP(K_m) >1-\frac{1}{2m}$.
Let $\Omega_m:=\Omega_{{\sf rig, ic}} \cap K_m$ for $m \in \N$. 
Since $\tilde{u}$ is $\T_\mrmv$-continuous on $\Omega_m$, the function $\tilde{u}_{r}^{\eta}$ is continuous on $(\Omega_{m})_r^\eta$ for every~$\eta \in \Omega_m$ and $r \in \N$ where $(\Omega_m)_{r}^\eta$ is the section defined in~\eqref{e:SEF2}. 
By Prop.~\ref{p:ConditionalIntegration}, we have that
\begin{align} \label{eq: CEIR}
\QP(\Omega_m)= \int_{\dUpsilon}\QP_{r}^\eta \bigl((\Omega_{m})_r^\eta\bigr) \diff \QP(\eta) \fstop 
\end{align}
Thus, by noting that $\QP(\Omega_m) > 1-\frac{1}{2m}$ and the integrand of the r.h.s.\ of \eqref{eq: CEIR}  is non-negative and bounded from above by $1$, there exists $\Omega_{m, r} \subset \dUpsilon$ so that $\QP(\Omega_{m, r}) > 1-\frac{1}{2m}$ and 
\begin{align} \label{eq: Ir: 11}
\QP_{r}^\eta \bigl((\Omega_{m})_r^\eta\bigr)>0 \qquad \forall \eta \in  \Omega_{m, r}\fstop
\end{align}
Define $\Omega_{m}^r:= \Omega_m \cap \Omega_{m, r}$. As $\QP(\Omega_{m, r}), \QP(\Omega_m) > 1-\frac{1}{2m}$, by Inclusion-Exclusion formula, it holds that
\begin{align} \label{eq: Ir 111}
\QP(\Omega_{m}^r) > 1-\frac{1}{m}\comma  \quad m \in \N\fstop
\end{align}
Combining~\eqref{eq: E: erg0}~and~\eqref{eq: Ir: 11} with the fact that  $\QP_{r}^\eta\mrestr{{\dUpsilon^{k}(B_r)}}$ is fully supported in $\dUpsilon^{k}(B_r)$ and $\tilde{u}$ is $\T_\mrmv$-continuous on~$(\Omega_{m})_r^\eta$,  we obtain that 
\begin{align} \label{eq: E: erg1}
\text{$\tilde{u}_{r, \eta}\equiv C^k_{r, \eta}$ {\it everywhere} in $(\Omega_{m})_r^\eta$ for every $\eta \in \Omega_{m}^r$}\, \fstop
\end{align}  
By Lemma \ref{lem: IEF} in Appendix applied to $\Omega_m^r$ in \eqref{eq: Ir 111},  we can take $n\mapsto m_n \in \N$ with $m_{n} \le m_{n'}$ for $n \le n'$ so that, by taking 
$\Omega=\limsup_{n \to \infty}\cap_{r=1}^n\Omega_{m_n}^r$,
it holds that 
$$\QP(\Omega) = 1 \fstop$$ 

We now prove that 
\begin{align} \label{eq: m: 2}
\text{$\tilde{u}$ is constant~$\QP\text{-a.e.}$~on $\Omega$} \fstop
\end{align}
\paragraph{Claim 1}The statement~\eqref{eq: m: 2} holds if the following statement is true: for every $\Xi_1, \Xi_2 \subset \Omega$ with $\mu(\Xi_1)\mu(\Xi_2)>0$, there exist $\gamma^1 \in \Xi_1$ and $\gamma^2 \in \Xi_2$ so that 
 \begin{align} \label{eq: chyp}
 \tilde{u}(\gamma^1) =\tilde{u}(\gamma^2)\fstop
 \end{align}
 \proof[Proof of Claim 1]
Assume that the statement~\eqref{eq: chyp} is true. Take $\Xi_1=\{\tilde{u}>a\}$ and $\Xi_2=\{\tilde{u} \le a\}$ for $a \in \R$. If there exists $a \in \R$ so that $\mu(\Xi_1)\mu(\Xi_2)>0$, then this contradicts \eqref{eq: chyp}. Thus, there is no such $a \in \R$, which means  
$\QP(\Xi_1)\QP(\Xi_2)=0$ for every $a \in \R$.  This concludes that $\tilde{u}$ is constant $\QP$-a.e.\ on $\Omega$.
 \qed
\smallskip

We thus only have to prove \eqref{eq: chyp}. Without loss of generality, we may assume $\QP(\Xi_1)>0$. Since $\mu$ is tail trivial, $\mu(\Xi_1)>0$ and $\Xi_1 \subset \mathcal T(\Xi_1)$, it holds that $\mu(\mathcal T(\Xi_1))=1$, where $\mathcal T(\Xi_1)$ is the tail set of $\Xi_1$ as defined in~\eqref{eq: ts}. Thus, $\mu(\mathcal T(\Xi_1) \cap \Xi_2)>0$, and $\mathcal T(\Xi_1) \cap \Xi_2$ is non-empty. Take an element $\gamma^2 \in \mathcal T(\Xi_1) \cap \Xi_2$. By the definition \eqref{eq: ts} of the tail set $\mathcal T(\Xi_1)$ and $\Omega \subset \Omega_{\sf rig}$, there exists $r_0 \in \N$ and $\gamma^1 \in \Xi_1$  so that
 \begin{align} \label{eq: LSP-2}
 \gamma^1_{B_{r_0}^c} = \gamma^2_{B_{r_0}^c}, \quad \gamma^1({B_{r_0}})=\gamma^2({B_{r_0}}) \fstop
 \end{align}
\paragraph{Claim 2}$\gamma^1_{B_{j}}, \gamma^2_{B_{j}}\in (\Omega_{m_j})_j^{\gamma^2}$ for some $j \in \N$.
\proof[Proof of Claim 2]
Recall
\begin{align} \label{eq: LSP}
\Omega=\limsup_{n \to \infty}\cap_{r=1}^n\Omega_{m_n}^r := \bigcap_{n \ge1} \bigcup_{j \ge n} \bigcap_{r=1}^j\Omega_{m_j}^r \fstop
\end{align}
As $\gamma^1, \gamma^2 \in \Omega$, there exist $j_1, j_2 \in \N$ with $j_1, j_2 \ge r_0$ so that 
$$ \gamma^1 \in \bigcap_{r=1}^{j_1}\Omega_{m_{j_1}}^r \comma \quad \gamma^2 \in \bigcap_{r=1}^{j_2}\Omega_{m_{j_2}}^r \comma\quad \text{in particular}\comma \quad  \gamma^1  \in \Omega_{m_{j_1}}^{j_1} \quad \gamma^2 \in \Omega_{m_{j_2}}^{j_2} \fstop$$
We may assume without loss of generality $j_1 \le j_2$. As $\Omega_{m_{j_1}}^{j_1}\subset \Omega_{m_{j_1}}$ and $\Omega_{m_{j_2}}^{j_2}\subset \Omega_{m_{j_2}}$ by definition, and the monotonicity~$\Omega_{m_{j_1}} \subset  \Omega_{m_{j_2}}$ by construction, we have
\begin{align}\label{LSP-22}
\gamma^1 \in \Omega_{m_{j_1}}^{j_1} \subset \Omega_{m_{j_1}} \subset  \Omega_{m_{j_2}} \comma \quad \gamma^2 \in \Omega_{m_{j_2}}^{j_2}\subset\Omega_{m_{j_2}} \fstop
\end{align}
As $j_2 \ge r_0$, \eqref{eq: LSP-2} implies
\begin{align} \label{eq: LSP-222}
 \gamma^1_{B_{j_2}^c} = \gamma^2_{B_{j_2}^c}, \quad \gamma^1({B_{j_2}})=\gamma^2({B_{j_2}})=:k \fstop
 \end{align}
By~\eqref{LSP-22} and~\eqref{eq: LSP-222}, we obtain
\begin{align} \label{eq: LSP-2222}
\gamma^1_{B_{j_2}}, \gamma^2_{B_{j_2}}\in (\Omega_{m_{j_2}})_{j_2}^{\gamma^2} \fstop
\end{align}
\qed

We now resume the proof of \eqref{eq: chyp}. In view of~\eqref{eq: E: erg1} and~\eqref{eq: LSP-2222}, we conclude 
\begin{align*}
\tilde{u}(\gamma^1) &=  \tilde{u}(\gamma^1_{B_{j_2}}+\gamma^1_{B_{j_2}^c}) = \tilde{u}_{j_2}^{\gamma^2}(\gamma^1_{B_{j_2}})
= C^{\gamma^2, k}_{j_2} 
=\tilde{u}_{j_2}^{\gamma^2}(\gamma^2_{B_{j_2}})
=\tilde{u}(\gamma^2_{B_{j_2}}+\gamma^2_{B_{j_2}^c}) 
=\tilde{u}(\gamma^2) \comma
\end{align*}
which proves \eqref{eq: chyp}. The proof is completed.
\qed

\bigskip
For~a closed Markovian subspace~$\mathcal F^{\U, \QP} \subset  \dom{\EE{\dUpsilon}{\QP}}$, let $S_t^{\U, \QP}$ be the corresponding $L^2(\QP)$-semigroup and $(L^{\U, \QP}, \dom{L^{\dUpsilon, \QP}})$ be the infinitesimal generator respectively.
\begin{cor}\label{cor: Erg0}
Let $\QP$ be a Borel probability measure on~$\U$ and $\mathcal F^{\U, \QP} \subset  \dom{\EE{\dUpsilon}{\QP}}$ be any closed Markovian subspace.  Suppose~\ref{ass:SCE}, \ref{ass:ConditionalClos}, \ref{ass:ConditionalErg},~\ref{ass:QR} of~$\mathcal F^{\U, \QP}$, ~\ref{ass:Rig} and~\ref{ass:T}. Then, the following hold:
\begin{enumerate}[$(i)$]
\item  $\ttonde{\EE{\dUpsilon}{\QP}, \mathcal F^{\U, \QP}}$ is irreducible;
\item $\{S^{\dUpsilon, \QP}_t\}$ is irreducible, i.e., every $\Xi\in \mathscr B(\tau_\mrmv)^\QP$ with
 $$S^{\dUpsilon, \QP}_t (\1_{\Xi}f) = \1_{\Xi}S^{\dUpsilon, \QP}_t f,\quad  f \in L^2(\QP) $$
 satisfies either $\QP(\Xi)=1$ or $\QP(\Xi)=0$;
\item $\{S^{\dUpsilon, \QP}_t\}$ is ergodic, i.e., 
\begin{align*}
\int_{\dUpsilon} \biggl( S^{\dUpsilon, \QP}_t u - \int_{\dUpsilon} u \diff \QP \biggr)^2 \diff \QP \xrightarrow{t \to \infty} 0, \quad u \in L^2(\QP);
\end{align*}
\item $L^{\dUpsilon, \QP}$-harmonic functions are trivial, i.e., 
$$\text{If}\ u \in \dom{L^{\dUpsilon, \QP}}\ \text{and}\  L^{\dUpsilon, \QP}u =0, \quad \text{then}\  \ u=\text{const.}\,.$$
\end{enumerate}
\end{cor}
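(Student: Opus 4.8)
The plan is to observe that the equivalence (i)\,$\iff$\,(ii) is precisely Theorem~\ref{thm: Erg}, so the only remaining task is to establish the purely functional-analytic equivalences (ii)\,$\iff$\,(iii)\,$\iff$\,(iv)\,$\iff$\,(v), which hold for \emph{any} symmetric Dirichlet form on $L^2(\dUpsilon,\QP)$ over a probability space and involve nothing about the configuration-space structure. The one preliminary I would record is \emph{conservativeness}: the constant function $\1_\dUpsilon$ belongs to $\Cyl{\Dz}$ and has $\cdc^{\dUpsilon,\QP}(\1_\dUpsilon)=0$ by the chain rule~\eqref{eq:d:LiftCdCRep1}, hence $\1_\dUpsilon\in\dom{\EE{\dUpsilon}{\QP}}$ with $\EE{\dUpsilon}{\QP}(\1_\dUpsilon)=0$; by the Cauchy--Schwarz inequality for $\EE{\dUpsilon}{\QP}$ this forces $\EE{\dUpsilon}{\QP}(\1_\dUpsilon,v)=0$ for all $v$, so $\1_\dUpsilon\in\dom{\Delta^{\dUpsilon,\QP}}$, $\Delta^{\dUpsilon,\QP}\1_\dUpsilon=0$ and $T^{\dUpsilon,\QP}_t\1_\dUpsilon=\1_\dUpsilon$ for every $t>0$. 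Moreover, since $\QP$ is a probability measure, the orthogonal projection of $L^2(\QP)$ onto the constants is $u\mapsto\int_\dUpsilon u\diff\QP$.

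For (ii)\,$\iff$\,(iii) I would invoke the standard characterisation of irreducibility of a symmetric Dirichlet form in terms of the absence of non-trivial invariant sets of the associated $L^2$-semigroup --- the same characterisation already used in the proof of Theorem~\ref{thm: Erg1}; see \cite[Prop.\ 2.3 and Appendix]{AlbKonRoe97}. For (ii)\,$\iff$\,(v): if $u\in\dom{\Delta^{\dUpsilon,\QP}}$ with $\Delta^{\dUpsilon,\QP}u=0$, then $\EE{\dUpsilon}{\QP}(u)=-\int_\dUpsilon u\,\Delta^{\dUpsilon,\QP}u\diff\QP=0$, so $u$ is $\QP$-a.e.\ constant by (ii); conversely, if $\EE{\dUpsilon}{\QP}(u)=0$ then Cauchy--Schwarz gives $\EE{\dUpsilon}{\QP}(u,v)=0$ for all $v\in\dom{\EE{\dUpsilon}{\QP}}$, whence $u\in\dom{\Delta^{\dUpsilon,\QP}}$ with $\Delta^{\dUpsilon,\QP}u=0$ by the definition of the $L^2$-generator, so (v) forces $u$ to be constant and (ii) holds.

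For (iii)\,$\Rightarrow$\,(iv) I would use the spectral theorem for the self-adjoint contraction semigroup, writing $T^{\dUpsilon,\QP}_t=\int_{[0,\infty)}e^{-\lambda t}\diff E_\lambda$: by (ii) the null space $\ker\Delta^{\dUpsilon,\QP}$, which is the range of the spectral projection $E_{\{0\}}$, consists only of constants (if $u\in\ker\Delta^{\dUpsilon,\QP}$ then $\EE{\dUpsilon}{\QP}(u)=-\int_\dUpsilon u\,\Delta^{\dUpsilon,\QP}u\diff\QP=0$), so $E_{\{0\}}u=\int_\dUpsilon u\diff\QP$, and then for every $u\in L^2(\QP)$,
\begin{align*}
\norm{T^{\dUpsilon,\QP}_t u-\int_\dUpsilon u\diff\QP}_{L^2(\QP)}^{2}
=\int_{(0,\infty)}e^{-2\lambda t}\diff\norm{E_\lambda u}^{2}
\xrightarrow{t\to\infty}0
\end{align*}
by dominated convergence against the finite measure $\diff\norm{E_\lambda u}^{2}$, which is (iv). For (iv)\,$\Rightarrow$\,(iii): if $A\in\A_\mrmv(\msE)^\QP$ satisfies $T^{\dUpsilon,\QP}_t(\1_A f)=\1_A T^{\dUpsilon,\QP}_t f$ for all $f\in L^2(\QP)$, then taking $f=\1_\dUpsilon$ and using conservativeness yields $T^{\dUpsilon,\QP}_t\1_A=\1_A$ for all $t>0$; applying (iv) to $u=\1_A$ then gives $\1_A=\int_\dUpsilon\1_A\diff\QP=\QP(A)$ $\QP$-a.e., hence $\QP(A)\in\{0,1\}$. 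Together with Theorem~\ref{thm: Erg}, this closes the circle of implications among (i)--(v).

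I do not expect a genuine obstacle here: the argument is a routine application of the spectral theorem on top of Theorem~\ref{thm: Erg}. The only points demanding a little care are recording conservativeness --- so that the constants are exactly the $\{T^{\dUpsilon,\QP}_t\}$-invariant functions and $T^{\dUpsilon,\QP}_t\1_A=\1_A$ for every invariant set $A$ --- and being consistent about working throughout on the $\QP$-completion $\A_\mrmv(\msE)^\QP$, both of which are straightforward.
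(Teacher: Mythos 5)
Your proposal is correct and follows the same route as the paper: the equivalence (i)$\iff$(ii) is taken from Theorem~\ref{thm: Erg}, and the remaining equivalences (ii)$\iff$(iii)$\iff$(iv)$\iff$(v) are the standard functional-analytic facts for a conservative symmetric Dirichlet form over a probability space, which the paper simply delegates to \cite[Prop.\ 2.3 and Appendix]{AlbKonRoe97} while you spell them out (conservativeness via $\1\in\Cyl{\Dz}$, the generator argument, and the spectral theorem). The extra detail is accurate but does not constitute a different method.
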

\proof
The statement~(i) is the consequence of Theorem~\ref{thm: Erg}. The equivalences (i)$\iff$(ii)$\iff$(iii)$\iff$(iv) are standard in functional analysis. We refer the readers to, e.g., \cite[Prop.\ 2.3 and Appendix]{AlbKonRoe97}. 
\qed
\smallskip

Let $(\mathbf X_t, \mathbf P_\gamma)$ be a Markov process associated with the quasi-regular Dirichlet form $\ttonde{\EE{\dUpsilon}{\QP},\mathcal F^{\U, \QP}}$ (see \cite[Thm.\ 3.5 in Chap.\ IV]{MaRoe90}). We write $\mathbf P_{\nu}$ for $\int_{\dUpsilon} \mathbf P_\gamma(\cdot) d\nu(\gamma)$ for a bounded Borel measure $\nu$ on $\dUpsilon$. Recall that $\mathcal F^{\U, \QP}_e$ is the extended domain of $\mathcal F^{\U, \QP}$ defined in \eqref{d:EDD}. 
\begin{cor}\label{cor: Erg1}
Let~$\QP$ be a Borel probability measure on~$\U$~and $\mathcal F^{\U, \QP} \subset  \dom{\EE{\dUpsilon}{\QP}}$ be any closed Markovian subspace.  Suppose~\ref{ass:SCE}, \ref{ass:ConditionalClos}, \ref{ass:ConditionalErg},~\ref{ass:QR} of~$\mathcal F^{\U, \QP}$, \ref{ass:Rig}, \ref{ass:T} and $\1 \in \mathcal F^{\U, \QP}_e$. 
Then, the following hold: 
\begin{enumerate}[$(i)$]
\item  for every Borel measurable $\QP$-integrable function $u$, it holds $\mathbf P_\QP$-a.s.\ that
\begin{align} \label{eq: TE3}
\lim_{t \to \infty} \frac{1}{t} \int_0^t u(\mathbf X_s)ds = \int_{\dUpsilon} u \diff \QP;
\end{align}
\item for every non-negative bounded function $h$, \eqref{eq: TE3} holds in $L^1(\mathbf P_{h\cdot\QP})$; 
\item the convergence \eqref{eq: TE3} holds $\mathbf P_\gamma$-a.s.\ for $\EE{\dUpsilon}{\QP}$-q.e.\ $\gamma$.
\end{enumerate}
\end{cor}
\proof
The form $\ttonde{\EE{\dUpsilon}{\QP},\mathcal F^{\U, \QP}}$ is irreducible by Theorem~\ref{thm: Erg}.
Furthermore, it is recurrent as $\1 \in \mathcal F^{\U, \QP}_e$ and $\EE{\dUpsilon}{\QP}(\mathbf 1)=0$, see \cite[Thm.~1.6.3]{FukOshTak11}. 
Therefore, by \cite[Thm.\ 4.7.3]{FukOshTak11}, the proof is complete (although \cite[Thm.\ 4.7.3]{FukOshTak11} assumes the local compactness of the state space, the same proof applies verbatim). 
\qed

\subsection{Finiteness of $\bar{\mssd}_\dUpsilon$}
Recall that $\bar{\mssd}_{\dUpsilon}^\QP(\Xi, \Lambda)$  has been defined in \eqref{d:DFS}  for $\Xi, \Lambda \subset \U$.
 In this subsection, we investigate relations among the tail triviality~\ref{ass:T}, the irreducibility, and the finiteness $\bar{\mssd}_{\dUpsilon}^\QP(\Xi, \Lambda)<\infty$. Namely, we discuss  relations among the following statements:
\begin{enumerate}[$(a)$]
\item $\mu$ is tail trivial~\ref{ass:T};
\item $\bar{\mssd}_{\dUpsilon}^\QP(\Xi, \Lambda)<\infty$ whenever $\Xi \in \mathscr B(\tau_\mrmv)^\QP, \Lambda \in \mathscr B(\tau_\mrmv)$ and $\mu(\Xi), \mu(\Lambda)>0$;
\item $\ttonde{\EE{\dUpsilon}{\QP},\mathcal F^{\U, \QP}}$ is irreducible.
\end{enumerate}

\begin{thm} \label{thm: Equiv} 
Let~$\QP$ be a Borel probability measure on~$\U$. 
Then,  
\begin{itemize}
\item \ref{c:FD}  $\implies$ \ref{c:TT};
\item if \ref{ass:Rig} holds,  then \ref{c:TT} $\implies$ \ref{c:FD}.
\end{itemize}
Suppose that $\QP$ satisfies~\ref{ass:SCE} and~\ref{ass:ConditionalClos},  and $\mathcal F^{\U, \QP} \subset  \dom{\EE{\dUpsilon}{\QP}}$ is any closed Markovian subspace. Then the following hold.
\begin{itemize}
\item if  \ref{ass:ConditionalErg}, \ref{ass:QR}  and \ref{ass:Rig} hold,  then \ref{c:FD} $\implies$ \ref{c:IR};
\item if \ref{p:Rad} holds,  then \ref{c:IR} $\implies$ \ref{c:FD}.
\end{itemize}

\end{thm}
\begin{proof}{\it \ref{c:FD}  $\implies$ \ref{c:TT}}. We prove by contradiction. Assume \ref{c:FD} but $\QP$ is not tail trivial. Then, there exists a tail-measurable set $\Xi \in \mathscr T(\U)$ so that $\QP(\Xi), \QP(\Xi^c)>0$. Note that  $\Xi \in \mathscr B(\tau_\mrmv)$ as  $\mathscr T(\U) \subset \mathscr B(\tau_\mrmv)$ by construction. Note also that $\mathcal T(\Xi)=\Xi$ and $\mathcal T(\Xi^c)=\Xi^c$ as $\Xi, \Xi^c \in \mathscr T(\U)$, where $\mathcal T(\Xi)$ is the tail set of $\Xi$ defined in \eqref{eq: ts}.
By \ref{c:FD}, we have 
\begin{align}
\bar{\mssd}_{\dUpsilon}^\QP(\Xi, \Xi^c)<\infty \fstop
\end{align}
By~\eqref{e:LLR2}, this implies that there exists $\gamma^1 \in \Xi$, $\gamma^2 \in \Xi^c$ and $r \in \N$ so that 
$$\gamma^1_{B_r^c} = \gamma^2_{B_r^c} \comma \quad \gamma^1(B_r)=\gamma^2(B_r) \fstop$$
This however means that $\gamma^1, \gamma^2 \in \mathcal T(\Xi) \cap \mathcal T(\Xi^c) = \Xi \cap \Xi^c=\emptyset$, which is a contradiction. 
\smallskip

{\it \ref{c:TT} $\implies$ \ref{c:FD}}. We prove by contradiction. Assume \ref{c:TT} but \ref{c:FD} does not hold. Then, there exist $\Xi, \Lambda \subset \U$ with $\QP(\Xi), \QP(\Lambda)>0$ so that $\bar{\mssd}_{\dUpsilon}^\QP(\Xi, \Lambda)=\infty$.  By modifying a $\QP$-negligible set, we may assume without loss of generality that 
\begin{align}\label{e:DZP}
\bar{\mssd}_{\dUpsilon}(\cdot, \Lambda) = \infty \quad \text{everywhere on $\Xi$} \fstop
\end{align}
Let $\Omega_{\sf rig}$ be the set defined in the proof of (ii) of Thm.~\ref{thm: Erg}.   Let $\tilde{\Lambda}:=\Lambda \cap \Omega_{\sf rig}$. As $\tilde{\Lambda} \subset \Lambda$, we have
\begin{align}\label{ineq:DZPP}
\infty=\bar{\mssd}_\U(\cdot, \Lambda) \le \bar{\mssd}_\U(\cdot, \tilde{\Lambda}) \quad \text{everywhere on $\Xi$}\fstop
\end{align}
By \ref{c:TT}, we have $\QP(\mathcal T(\Xi))=\QP(\mathcal T(\tilde{\Lambda}))=1$ as $\QP(\Xi), \QP(\tilde{\Lambda})>0$, $\Xi \subset \mathcal T(\Xi)$ and $\tilde{\Lambda}\subset \mathcal T(\tilde{\Lambda})$. Therefore, $\QP(\mathcal T(\Xi) \cap \tilde{\Lambda})>0$ and $\mathcal T(\Xi) \cap \tilde{\Lambda} \neq \emptyset$. Take $\gamma \in \mathcal T(\Xi) \cap \tilde{\Lambda}$. By the defnition of the tail-operation $\mathcal T$ and~\ref{ass:Rig}, there exists $\eta \in \Xi$ and $r \in \N$ so that 
$$\gamma_{B_r^c}= \eta_{B_r^c} \comma \quad \gamma(B_r) = \eta(B_r) \fstop$$
Thus, by~\eqref{e:LLR2}, we obtain $\bar{\mssd}_\U(\gamma, \eta)<\infty$, which contradicts \eqref{ineq:DZPP}. 
\smallskip

{\it \ref{c:FD} $\implies$ \ref{c:IR}}. By \ref{c:FD} $\implies$ \ref{c:TT} and \ref{c:TT} $\implies$ \ref{c:IR} by (ii) of Thm.~\ref{thm: Erg}, we conclude~\ref{c:IR}. 

 \smallskip
 
{\it \ref{c:IR} $\implies$ \ref{c:FD}}. By~\ref{p:Rad}, Prop.~\ref{p:UMD} and $\bar{\mssd}_\dUpsilon(\cdot, \Lambda) \in \Lip_b^1(\bar{\mssd}, \QP)$, it holds that
$$\bar{\mssd}_\dUpsilon(\cdot, \Lambda)\wedge c \in \mathcal F^{\U, \QP}, \quad \cdc^{\dUpsilon, \QP}(\bar{\mssd}_\dUpsilon(\cdot, \Lambda) \wedge c) \le 1 \comma \quad c >0\fstop $$
Let $\bar{\sf d}_{\mu, \Lambda}$ be the maximal function associated with $\ttonde{\EE{\dUpsilon}{\QP},\mathcal F^{\U, \QP}}$ defined in~\eqref{d:HRMF}. 
By the definition of the maximal function $\bar{\sf d}_{\mu, \Lambda}$, we obtain that 
\begin{align*}
\bar{\mssd}_\dUpsilon(\cdot, \Lambda) \wedge c \leq \bar{\sf d}_{\mu, \Lambda} \wedge c \quad \text{$\QP$-a.e.}\fstop
\end{align*}
Passing to the limit $c \to \infty$, we obtain
\begin{align}\label{e:IST1}
\bar{\mssd}_{\dUpsilon}(\cdot, {\Lambda}) \le  \bar{\sf d}_{\mu, {\Lambda}} \comma \quad \Lambda \in \mathscr B(\tau_\mrmv) \fstop
\end{align}
which leads to
\begin{align*}
\bar{\mssd}^\QP_{\dUpsilon}(\Xi,\Lambda) \leq  \mu\text{-}\essinf_{\Xi}\bar{\sf d}_{\mu, \Lambda}\fstop
\end{align*}
By~\cite[Lem.\ 2.16]{HinRam03}, \ref{c:IR} implies 
$\mu\text{-}\essinf_{\Xi}\bar{\sf d}_{\mu, \Lambda}<\infty,$
which concludes \ref{c:FD}. 
\end{proof}
\begin{cor} \label{cor: Equiv} 
Let~$\QP$ be a Borel probability measure on~$\U$ satisfying \ref{ass:SCE}, ~\ref{ass:ConditionalClos}, and let $\mathcal F^{\U, \QP} \subset  \dom{\EE{\dUpsilon}{\QP}}$ be any closed Markovian subspace. Then the following hold.
\begin{itemize}
\item If  \ref{ass:ConditionalErg}, \ref{ass:QR}  and \ref{ass:Rig}  hold, then 
$$ \text{$\QP$ is tail trivial} \quad \implies \quad \text{$(\EE{\dUpsilon}{\QP}, \mathcal F^{\U, \QP})$ is irreducible} \;$$
\item If \ref{p:Rad} holds for $\mathcal F^{\U, \QP}$,
$$\quad \text{$(\EE{\dUpsilon}{\QP}, \mathcal F^{\U, \QP})$ is irreducible}\quad \implies   \quad  \text{$\QP$ is tail trivial} \fstop$$
\end{itemize}
\end{cor}

\begin{rem}
We proved the implication  \ref{c:IR} $\implies$ \ref{c:FD} in Thm.~\ref{thm: Erg} under \ref{ass:Dom} with the domain $\dom{\EE{\dUpsilon}{\QP}}$. The same implication was proved in Cor.~\ref{cor: Equiv} under a different assumption~\ref{p:Rad} with a smaller domain $\mathcal F^{\U, \QP}$. The assumption \ref{ass:Dom} is a condition for the truncated forms~$\E_r^{\U, \QP}$ while \ref{p:Rad} is a condition for $(\E^{\U,\QP}, \F^{\U, \QP})$. We do not have a simple comparison of these two different conditions: as the irreducibility with a smaller domain is a weaker statement than that with a larger domain, Cor.~\ref{cor: Equiv} looks providing the tail-triviality under a weaker assumption than Thm.~\ref{thm: Erg}. However, we do not know whether \ref{p:Rad} is weaker than \ref{ass:Dom}.  For the verification, Cor.~\ref{cor: Equiv} is more convenient  as will be seen in Section~\ref{sec: Ver}.
\end{rem}

\section{Verifications of the main assumptions} \label{sec: Ver}
In this section, we provide sufficient conditions for the verification of the main assumptions in Theorems~\ref{thm: Erg},~\ref{thm: Equiv}.  
See~Examples~\ref{exa: TT},~\ref{exa: R} for the tail triviality~\ref{ass:T} and the number rigidity~\ref{ass:Rig}, and see \S\ref{subsec: ES} for the quasi-regularity~\ref{ass:QR}. 

\paragraph{Quasi-Gibbs measures}We recall the definition of \emph{quasi-Gibbs measures}.
Several slightly different (possibly \emph{non}-equivalent) definitions for this concept were introduced by H.~Osada, see e.g.~\cite[Dfn.~2.1]{Osa13},~\cite[Dfn.\ 3.1]{OsaTan14}, ~\cite[Dfn.~5.1]{Osa19}, or~\cite[Dfn.~2.2]{OsaTan20}.

Let~$\Phi\colon \R^n\rar\R$ be $\mathscr B(\tau_\mrmv)$-measurable, and by~$\Psi\colon (\R^n)^\tym{2}\rar\R$ be $\mathscr B(\tau_\mrmv)^\otym{2}$-measurable and symmetric.
The function~$\Phi$ will be called the \emph{free potential}, and~$\Psi$ the \emph{interaction potential}.
These potentials define a \emph{Hamiltonian}~$\msH_r\colon \dUpsilon\rar \R$ as
\begin{align*}
\msH_r\colon \gamma\longmapsto \Phi^\trid \gamma_{B_r}+\tfrac{1}{2}\Psi^\trid \ttonde{\gamma_{B_r}^\otym{2}}\comma\qquad \gamma\in \dUpsilon\fstop
\end{align*}
Recall that $\mathcal K_r^\eta:=\{k \in \N_0: \QP_r^\eta(\U^k(B_r))>0\}$ has been defined in Dfn.~\ref{d:ConditionalAC}.
\begin{defs}[Quasi-Gibbs measures, {cf.~\cite[Dfn.~2.2]{OsaTan20}}]\label{d:QuasiGibbs}
We say that a Borel probability $\QP$ on~$\U$ is a \emph{$(\Phi,\Psi)$-quasi-Gibbs measure} if there exists a sequence $\{B_r\}_{r\in \N}$ of compact monotone increasing domains covering $\R^n$ so that, for $\QP$-a.e.~$\eta\in\dUpsilon$, every~$r \in \N$, every~$k\in \mathcal K_r^\eta$, there exists a constant~$c_{r,\eta,k}>0$ so that
\begin{align}\label{eq:localACquasiGibbs}
c_{r,\eta,k}^{-1}\, e^{-\msH_r} \cdot \PP_{\mssm_r}\mrestr{\dUpsilon^{k}(B_r)} \ \leq \QP_r^{\eta, k} \ \leq \ c_{r,\eta,k}\, e^{-\msH_r} \cdot \PP_{\mssm_r}\mrestr{\dUpsilon^{k}(B_r)}\fstop
\end{align}
For quasi-Gibbs measures, \ref{ass:SCE} follows immediately by \eqref{eq:localACquasiGibbs}. 
\end{defs}

\begin{rem}\label{r:QuasiGibbs} \ 
\begin{enumerate}[$(a)$]
\item\label{i:r:QuasiGibbs:1} The definition of quasi-Gibbs measures in~\cite[Dfn.~2.2]{OsaTan20} looks slightly different from Dfn.~\ref{d:QuasiGibbs} as we assume~\eqref{eq:localACquasiGibbs} only for~$k\in \mathcal K_r^\eta$ in place of $k\in \N$. These two definitions are, however, equivalent since the definitions of~$\QP_r^{\eta, k}$ in this article is {\it the restriction} on $\U^k(B_r)$:
$$\QP_r^{\eta, {k}}:=\QP_r^\eta\mrestr{\U^k(B_r)} \comma$$ 
 while the corresponding measure in~\cite[Dfn.~2.2]{OsaTan20} has been defined as the measure {\it conditioned} on $\U^k(B_r)$.
\item\label{i:r:QuasiGibbs:3}``$\QP$ belongs to $(\Phi,\Psi)$-quasi-Gibbs measures'' {\it does not necessarily} mean that $\QP$ is governed by the free potential $\Phi$ in the sense of the DLR equation. The symbol $\Phi$ here just plays a role as {\it representative} of the class of $(\Phi,\Psi)$-quasi-Gibbs measures {\it modulo perturbations by adding locally finite free potentials}. To be more precise, 
noting that the constant $c_{r,\eta,k}$ can depend on $r, \eta, k$, if $\QP$ is $(\Phi,\Psi)$-quasi-Gibbs, then $\QP$ is $(\Phi+\Phi',\Psi)$-quasi-Gibbs as well whenever $\Phi'|_{B_r}$ is bounded for every $r \in \N$. Therefore, in this case, we may write $(0,\Psi)$-quasi-Gibbs instead of $(\Phi,\Psi)$-quasi-Gibbs. 
\end{enumerate}
\end{rem}

\begin{ese}[See \cite{Osa19}]\label{r:QuasiGibbsEx}
The class of quasi-Gibbs measures includes all canonical Gibbs measure, and the laws of some determinantal/permnental point processes, as for instance:
\begin{enumerate}
\item\label{i:r:QuasiGibbsEx:1} mixed Poisson measures;
\item canonical Gibbs measures;
\item\label{i:r:QuasiGibbsEx:3} the laws of some determinantal/permanental point processes and related point processes, e.g.,  $\mathrm{sine}_\beta$,~$\mathrm{Bessel}_{\alpha,\beta}$, $\mathrm{Airy}_{\beta}$ ($\beta=1,2,4$) and Ginibre point processes.
\end{enumerate}
\end{ese}

\subsection{Assumption~\ref{ass:ConditionalClos}}
According to~Remark~\ref{r:HMZ} and~\eqref{eq:localACquasiGibbs}, Conditional Closability~\ref{ass:ConditionalClos} holds if 
\begin{align} \label{a:CCC}
e^{-\msH_r}|_{\U^k(B_r)} \in C_b\bigl(\U^k(B_r)\bigr) \quad k \in \N_0\, \quad r \in \N\fstop
\end{align}

\begin{rem}While \eqref{a:CCC} is sufficient to cover all the examples discussed in Section~\ref{sec: Exa}, it is not necessary for~\ref{ass:ConditionalClos}. 
Condition~\ref{ass:ConditionalClos} holds true if $\QP$ satisfies \emph{super-stability} and \emph{lower regularity} in the sense of Ruelle~\cite{Rue70, Osa98}, or the existence of upper semi-continuous bounds~$(\Phi_0,\Psi_0)$ such that
$$c\, \Phi_0\leq \Phi\leq c^{-1} \Phi_0\comma\quad c\,\Psi_0\leq \Psi \leq c^{-1}\Psi_0$$ for some constant~$c>0$, see~\cite[Eqn.~(A.3), p.~8]{Osa13}) and also~\cite{Osa96, Osa98}.  
\end{rem}

\subsection{Assumptions~\ref{ass:ConditionalErg}}
In this subsection, we verify Assumptions~\ref{ass:ConditionalErg}.
\begin{ass}\normalfont \label{asmp: ESL}
Let~$\mu$ be a quasi-Gibbs  measure on $\U$ satisfying~\ref{ass:ConditionalClos}, and suppose
\begin{enumerate}
\item there exists a closed $\mssm$-negligible set~$F\subset \R^n$ so that the free potential~$\Phi$ of~$\mu$ satisfies~$\Phi\in L^\infty_{loc}(\R^n\setminus F, \mssm)$;
\item there exists a closed $\mssm^{\otimes 2}$-negligible set~$F^{[2]}\subset \R^n \times \R^n$ so that the interaction potential~$\Psi$ of~$\mu$ satisfies~$\Psi\in L^\infty_{loc}\bigl(\R^n \times \R^n \setminus F^{[2]},\mssm^{\otimes 2} \bigr)$.
\end{enumerate}
\end{ass}

\begin{prop}[Sufficient conditions for~\ref{ass:ConditionalErg} {\cite[Prop.\ 7.13]{LzDSSuz21}}] \label{prop: ESL}
Under Assumption~\ref{asmp: ESL}, 
\ref{ass:ConditionalErg} holds.
\end{prop}
\begin{proof}
Noting that \ref{ass:ConditionalErg} follows from the conditional Sobolev-to-Lipschitz property proven in \cite[Prop.\ 7.13]{LzDSSuz21}, we conclude the statement. 
\end{proof}
\subsection{Markovian subspace with \ref{ass:QR} and ~\ref{p:Rad}}
The quasi-regularity~\ref{ass:QR} follows if  $\mathcal F^{\U, \QP}$ is chosen to be the closure of either: 
\begin{itemize}
\item $\Lip_b(\bar{\mssd}_\U, \tau_\mrmv)$ or $\Lip_b({\mssd}_\U, \tau_\mrmv)$ by~Cor.~\ref{cor:SDR}; 
\item smooth local functions $\mathscr D_\infty$ (see \cite[Thm.~1]{Osa96}).
\end{itemize}
If $\mathcal F^{\U, \QP}$ is chosen to be the closure of either 
$$\text{$\Lip_b(\bar{\mssd}_\U, \mu)\quad $ or $ \quad \Lip_b({\mssd}_\U, \mu)$} \comma$$
then Prop.~\ref{p:DF2} provides $\mathsf{(Rad_{\bar{\mssd}_\U, \QP})}$, $\mathsf{(Rad_{{\mssd}_\U, \QP})}$ respectively. With all these choices of cores, the Markovian property of $\mathcal F^{\U, \QP}$ has been proven in Prop.~\ref{p:DF2}. 
 \begin{cor}[Cor.~\ref{cor:SDR}, Prop.~\ref{p:DF2}]\label{c:QRR}
 Let $\QP$ be a quasi-Gibbs measure. 
 \begin{enumerate}[(i)]
 \item If either $\mathcal C=\Lip_b(\bar{\mssd}_\U, \tau_\mrmv)$, or $\mathcal C=\Lip_b({\mssd}_\U, \tau_\mrmv)$, then $\mathcal F^{\U, \QP}=\overline{\mathcal C}$ is Markovian and 
  $$\text{\ref{ass:QR} holds for $\mathcal F^{\U, \QP}$}\ ;$$
 \item If $\mathcal C=\Lip_b(\bar{\mssd}_\U, \mu)$ $($resp.~$\mathcal C=\Lip_b({\mssd}_\U, \mu)$$)$, then $\mathcal F^{\U, \QP}=\overline{\mathcal C}$ is Markovian and 
 $$\text{\ref{p:Rad} $($resp.~$\mathsf{(Rad_{{\mssd}_\U, \QP})}$$)$ holds for $\mathcal F^{\U, \QP}$}\fstop$$
 \end{enumerate}
 \end{cor}

\section{Examples} \label{sec: Exa}
Based on verifying the sufficient conditions provided in the previous section, we provide several examples to which our main results (Theorems~\ref{thm: Erg},~\ref{thm: Equiv}) can apply. 
In the following, we discuss four classes of examples: $\mathrm{sine}_2$, $\mathrm{Airy}_2$, $\mathrm{Bessel}_{\alpha,2}$ ($\alpha \ge 1$), and $\mathrm{Ginibre}$ point processes. 
They belong to the class of quasi-Gibbs measures as explained below, in particular, \ref{ass:SCE} holds true.  As all the examples discussed in the following are determinantal point processes, {the tail triviality}~\ref{ass:T} is a consequence of e.g., \cite[Theorem 2.1]{Lyo18} (see Example~\ref{exa: R} for more complete references). 
 
As noted in \ref{i:r:QuasiGibbs:3} in Remark \ref{r:QuasiGibbs},  the class of $(\Phi,\Psi)$-quasi-Gibbs measures is stable under perturbations of $\Phi$ in terms of adding locally bounded free potentials.  As seen in \cite[Thm.\ 2.2]{Osa13}, \cite[Thm.\ 5.6]{OsaTan14} and \cite[Thm.\ 2.3]{Osa13}, the free potentials $\Phi$ {\it representing} the classes of quasi-Gibbs measures in the cases of $\mathrm{sine}_2$, $\mathrm{Airy}_2$, and $\mathrm{Ginibre}$ are locally bounded, therefore $\Phi$ can be reduced to the representative
$$\Phi \equiv 0.$$
Thus, we only discuss the interaction potentials $\Psi$ for these cases below. 

\begin{ese}[$\mathrm{sine}_2$] \label{exa: S}
By \cite[Thm.\ 2.2]{Osa13}, the $\mathrm{sine}_2$ ensemble belongs to the class of $(0,\Psi)$-quasi-Gibbs measures with the interaction potential
$$\Psi(x, y):=-2\log|x-y|, \quad x, y \in \R.$$ 
Assumption~\ref{ass:ConditionalClos} follows from~\eqref{a:CCC}.
 Assumptions~\ref{ass:ConditionalErg} can be verified immediately by Proposition~\ref{prop: ESL} by noting that Assumption~\ref{asmp: ESL} is satisfied by taking $F^{[2]}=\{(x, y) \in \R^{\times 2}: x=y\}$ as $ \Psi \in L^\infty_{loc}(\R^{\times 2} \setminus F^{[2]}, \mssm^{\otimes 2})$. The number rigidity~\ref{ass:Rig} has been proved by \cite[Thm.\ 4.2]{Gho15} and \cite{ChhNaj18}. 
 A Markovian subspace $\mathcal F^{\U, \QP}$ having the quasi-regularity~\ref{ass:QR} and~\ref{p:Rad} has been constructed by Cor.~\ref{c:QRR}. We remark that the quasi-regularity~\ref{ass:QR} with respect to $\mathcal F^{\U, \QP}=\overline{\mathscr D}_\infty$  has been shown by combination of~~\cite[Cor.~4.1]{Osa13} and ~\cite[Thm.~1]{Osa96}.
\end{ese}

\begin{ese}[$\mathrm{Airy}_2$] \label{exa: A}
By \cite[Thm.\ 4.7]{OsaTan14}, the $\mathrm{Airy}_2$ ensemble belongs to the class of $(0,\Psi)$-quasi-Gibbs measures with the interaction potential
$$\Psi(x, y):=-2\log|x-y|, \quad x, y \in \R.$$
Thus the same arguments as in~Example~\ref{exa: S} apply to~\ref{ass:ConditionalClos}, ~\ref{ass:ConditionalErg} ~and~\ref{ass:QR}.
The number rigidity~\ref{ass:Rig} has been proved by~\cite{Buf16}. 
\end{ese}

\begin{ese}[$\mathrm{Bessel}_{\alpha,2}$, $\alpha \ge 1$]
By \cite[Thm.\ 2.4]{HonOsa15}, the class of measures $\mathrm{Bessel}_{\alpha,2}$ ($\alpha \ge 1$) belongs to the class of $(\Phi,\Psi)$-quasi-Gibbs measures with potentials (the sign of the potentials in \cite[Thm.\ 2.4]{HonOsa15} is opposite) 
$$\Phi(x)=-\alpha \log x, \quad \Psi(x, y):=-2\log|x-y|, \quad x, y \in \R.$$
Assumption~\ref{ass:ConditionalClos} follows from~\eqref{a:CCC}.
Assumptions~\ref{ass:ConditionalErg}  can be verified immediately by Proposition~\ref{prop: ESL} by the same argument in Example~\ref{exa: S} for $\Psi$. For $\Phi$, it suffices to take $F:=\{0\}$ in (i) in Assumption~\ref{asmp: ESL}, with which $\Phi$ belongs to $L^\infty_{loc}(\R \setminus F, \mssm)$.
The number rigidity~\ref{ass:Rig} has been proved by \cite{Buf16}.  
A Markovian subspace $\mathcal F^{\U, \QP}$ having the quasi-regularity~\ref{ass:QR} and~\ref{p:Rad} has been constructed by Cor.~\ref{c:QRR}. We remark that the quasi-regularity~\ref{ass:QR} with respect to $\mathcal F^{\U, \QP}=\overline{\mathscr D}_\infty$ has been shown by combination of~\cite[Thm.~2.4]{HonOsa15}, \cite[Lem.~2.1]{Osa13} and \cite[Thm.~1]{Osa96}. 
\end{ese}

\begin{ese}[$\mathrm{Ginibre}$]
By \cite[Thm.\ 2.3]{Osa13}, the class of measures $\mathrm{Ginibre}$ belongs to the class of $(\Phi,\Psi)$-quasi-Gibbs measures with the interaction potential 
$$\Psi(z_1, z_2):=-2\log|z_1-z_2|, \quad z_1, z_2 \in \R^{\times 2}.$$ 
Assumption~\ref{ass:ConditionalClos} follows from~\eqref{a:CCC}.
Assumptions~\ref{ass:ConditionalErg} can be verified immediately by Proposition~\ref{prop: ESL}. Note that Assumption~\ref{asmp: ESL} is satisfied since 
(ii) of Assumption~\ref{asmp: ESL} follows by taking $F^{[2]}=\{(x, y) \in (\R^{2})^{\times 2}: x=y\}$, with which $ \Psi \in L^\infty_{loc}( (\R^{2})^{\times 2} \setminus F^{[2]}, \mssm^{\otimes 2})$.  The number rigidity~\ref{ass:Rig} has been proved by \cite[Thm.~1.1]{GhoPer17}.  
A Markovian subspace $\mathcal F^{\U, \QP}$ having the quasi-regularity~\ref{ass:QR} and~\ref{p:Rad} has been constructed by Cor.~\ref{c:QRR}. We remark that the quasi-regularity~\ref{ass:QR} with respect to $\mathcal F^{\U, \QP}=\overline{\mathscr D}_\infty$  has been shown by combination of~\cite[Cor.~4.1]{Osa13} and \cite[Thm.~1]{Osa96}.

\end{ese}

\begin{appendix}
\section{}
\begin{lem}[{\cite[Lem.~A.1]{Suz22b}}] \label{l:WDG}
Let~$\QP$ be a Borel probability on~$\U$ satisfying that $\QP_r^\eta$ is absolutely continuous with respect to the Poisson measure~$\pi_{\mssm_r}$ for $r>0$ and $\QP$-a.e.~$\eta$. Let $\Sigma \subset B_r$ so that $\mssm_r(\Sigma^c)=0$. Let $\Omega(r):=\{\gamma \in \U: \gamma_\Sigma=\gamma_{B_r}\}$. Then, 
$$\QP\bigl(\Omega(r)\bigr)=1 \qquad r>0 \fstop$$
\end{lem}

Let $(\Omega,  \mathcal F, P)$ be a probability space. Recall that, for a sequence $(A_n)$ of sets in $\mathcal F$, we define the limit superior of sets as 
$$\limsup_{n \to \infty} A_n:=\bigcap_{n \ge 1} \bigcup_{j \ge n}A_j \fstop$$
Recall that, by a simple application of Fatou's lemma to the indicator function $\mathbf 1_{A_n^c}$, we see 
$$P(\limsup_{n \to \infty} A_n) \ge \limsup_{n \to \infty} P(A_n) \fstop$$
\begin{lem} \label{lem: IEF}
Let $(\Omega,  \mathcal F, P)$ be a probability space. 
Let $\{\Omega^r_m\}_{m, r \in \N} \subset \mathcal F$ satisfy that, 
for any $\epsilon>0$, there exists $m_\epsilon$ so that for every $m \ge m_\epsilon$ and $r \in \N$, 
$$P(\Omega^r_m) \ge 1-\epsilon \fstop$$ 
Then, there exists $n \mapsto m_n \in \N$ with $m_{n} \le m_{n'}$ for $n \le n'$ so that 
$$P\bigl(\limsup_{n \to \infty} \cap_{r=1}^n\Omega^r_{m_n} \bigr) = 1 \fstop$$
\end{lem}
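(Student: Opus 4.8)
\textbf{Proof plan for Lemma~\ref{lem: IEF}.}
The plan is to choose the indices $m_n$ along a sequence of $\epsilon$'s that decays fast enough (e.g.\ $\epsilon_n = 2^{-n}$) so that the first Borel--Cantelli lemma can be applied to the complements, and then use the trivial inclusion relating $\cap_{h=1}^n \Omega^h_{m_n}$ to a union of the individual complements. Concretely, first fix a summable sequence $\epsilon_n>0$ with $\sum_n \epsilon_n < \infty$, say $\epsilon_n = 2^{-n}$. For each $n$ let $m_n := m_{\epsilon_n}$ be the integer furnished by the hypothesis, so that $P(\Omega^h_{m_n}) \ge 1-\epsilon_n$ for every $h \in \N$; without loss of generality we may also take $n\mapsto m_n$ nondecreasing (replacing $m_n$ by $\max\{m_1,\dots,m_n\}$ changes nothing, since the hypothesis gives the bound for \emph{all} $m \ge m_{\epsilon_n}$ and all $h$).

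Next I would estimate the measure of the complement of the finite intersection. By the union bound,
\begin{align*}
P\Bigl( \bigl( \cap_{h=1}^n \Omega^h_{m_n}\bigr)^\complement \Bigr) = P\Bigl( \cup_{h=1}^n (\Omega^h_{m_n})^\complement \Bigr) \le \sum_{h=1}^n P\bigl( (\Omega^h_{m_n})^\complement \bigr) \le n\, \epsilon_n \fstop
\end{align*}
With the choice $\epsilon_n = 2^{-n}$ we get $\sum_n P\bigl( (\cap_{h=1}^n \Omega^h_{m_n})^\complement \bigr) \le \sum_n n 2^{-n} < \infty$. By the first Borel--Cantelli lemma, $P\bigl( \limsup_{n\to\infty} (\cap_{h=1}^n \Omega^h_{m_n})^\complement \bigr) = 0$. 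Since $\limsup_n A_n^\complement = (\liminf_n A_n)^\complement$ and $\liminf_n A_n \subset \limsup_n A_n$, this yields $P\bigl( \limsup_{n\to\infty} \cap_{h=1}^n \Omega^h_{m_n} \bigr) \ge P\bigl( \liminf_{n\to\infty} \cap_{h=1}^n \Omega^h_{m_n} \bigr) = 1$, which is the claim. (Alternatively, one can invoke directly the reverse-Fatou inequality $P(\limsup_n B_n) \ge \limsup_n P(B_n)$ quoted before the lemma, applied to $B_n = \cap_{h=1}^n \Omega^h_{m_n}$, after only checking $P(B_n) \ge 1 - n\epsilon_n \to 1$; this avoids Borel--Cantelli altogether.)

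There is essentially no obstacle here: the statement is a soft measure-theoretic fact, and the only thing to be careful about is the bookkeeping with the doubly-indexed family — namely that the hypothesis supplies a threshold $m_\epsilon$ that works \emph{uniformly in $h$}, so that the union bound over $h=1,\dots,n$ costs only a factor $n$, which is then killed by the super-exponential decay of $\epsilon_n$. The choice $\epsilon_n = 2^{-n}$ is more than enough; any $\epsilon_n$ with $\sum_n n\epsilon_n < \infty$ works.
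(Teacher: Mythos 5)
Your proposal is correct and essentially matches the paper's argument: both rest on the union bound over $h=1,\dots,n$ (costing a factor $n$) combined with a choice of $\epsilon_n$ decaying fast enough that $n\epsilon_n \to 0$, and your parenthetical alternative via the reverse-Fatou inequality $P(\limsup_n B_n) \ge \limsup_n P(B_n)$ is exactly the route taken in the paper. Your primary Borel--Cantelli variant is a harmless modification that in fact yields the slightly stronger conclusion $P\bigl(\liminf_{n\to\infty} \cap_{h=1}^n \Omega^h_{m_n}\bigr)=1$.
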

\proof
Define $\Omega^{n, \epsilon}:=\cap_{r=1}^n \Omega^r_{m_\epsilon}$. Then, by a simple application of Inclusion-Exclusion formula and the hypothesis $P(\Omega^r_{m_\epsilon}) \ge 1-\epsilon$ for every $r \in \N$, it holds that 
\begin{align*}
P(\Omega^{n, \epsilon})  \ge 1-c(n) \epsilon \comma
\end{align*}
where $c(n)$ is a constant monotone increasing in $n$.  Let $C(n)$ be a monotone increasing sequence so that $c(n)/C(n) <1$ and $c(n)/C(n) \downarrow 0$ as $n \to \infty$. Take $\epsilon_n := \frac{1}{C(n)}$, and $\Omega^{n}:=\Omega^{n, \epsilon_n}$. 
By the upper semi-continuity of probability measures regarding the limit superior of sets, we obtain
$$P\bigl(\limsup_{n \to \infty}\Omega^{n} \bigr) \ge  \limsup_{n\to\infty}P\bigl(\Omega^{n} \bigr) = \limsup_{n\to\infty}P\bigl(\Omega^{n, \epsilon_n} \bigr) \ge \lim_{n \to \infty} 1-\frac{c(n)}{C(n)} = 1 \fstop$$
The proof is complete.
\qed
\end{appendix}
\bibliographystyle{alpha}
\bibliography{MasterBib.bib}

\begin{thebibliography}{DHLM20}

\bibitem[AGS14]{AmbGigSav14}
{Ambrosio, L.}, {Gigli, N.}, and {Savar\'e, G.}
\newblock {Calculus and heat flow in metric measure spaces and applications to
  spaces with Ricci bounds from below}.
\newblock {\em {Invent.\ Math.}}, 395:289--391, 2014.

\bibitem[AKR97]{AlbKonRoe97}
{Albeverio, S.}, {Kondratiev, Yu.~G.}, and {R{\"{o}}ckner, M.}
\newblock {Ergodicity of $L^2$-semigroups and extremality of Gibbs states}.
\newblock {\em {J.\ Funct.\ Anal.}}, 144:394--423, 1997.

\bibitem[AKR98a]{AlbKonRoe98}
{Albeverio, S.}, {Kondratiev, Yu. G.}, and {R{\"{o}}ckner, M.}
\newblock {Analysis and Geometry on Configuration Spaces}.
\newblock {\em {J.\ Funct.\ Anal.}}, 154(2):444--500, 1998.

\bibitem[AKR98b]{AlbKonRoe98b}
{Albeverio, S.}, {Kondratiev, Yu.~G.}, and {R{\"{o}}ckner, M.}
\newblock {Analysis and Geometry on Configuration Spaces: The Gibbsian Case}.
\newblock {\em {J.\ Funct.\ Anal.}}, 157:242--291, 1998.

\bibitem[BH91]{BouHir91}
{Bouleau, N.} and {Hirsch, F.}
\newblock {\em {Dirichlet forms and analysis on Wiener space}}.
\newblock {De Gruyter}, 1991.

\bibitem[BNQ19]{BufNikQiu19}
{Bufetov, A. I.}, {Nikitin, P. P.}, and {Qui, Y.}
\newblock {On number rigidity for Pfaffian point processes}.
\newblock {\em {Mosc. Math. J. }}, 19(2):217--274, 2019.

\bibitem[BQS21]{BufQiuSha21}
{Bufetov, A.I.}, {Qui, Y.}, and {Shamov, A.}
\newblock {Kernels of conditional determinantal measures and the Lyons--Peres
  completeness conjecture}.
\newblock {\em {J. Eur. Math. Soc. (JEMS) }}, 23(5):1477--1519, 2021.

\bibitem[BS21]{BruSuz21}
{Bru\'e, E.} and {Suzuki, K.}
\newblock {BV Functions and Sets of Finite Perimeter on Configuration Spaces}.
\newblock {\em {arXiv:2109.06078}}, 2021.

\bibitem[{Buf}16]{Buf16}
{Bufetov, A.I.}
\newblock {Rigidity of determinantal point processes with the Airy, the Bessel
  and the Gamma kernel}.
\newblock {\em {Bulletin of Mathematical Sciences}}, 6:1631--72, 2016.

\bibitem[CN18]{ChhNaj18}
{Chhaibi, R.} and {Najnundel, J.}
\newblock {Rigidity of the Sine$_\beta$ process}.
\newblock {\em {Electron.\ Commun.\ Probab.}}, 23:1--8, 2018.

\bibitem[CPY98]{ChoParYoo98}
{Choi, V.}, {Park, M.-Y.}, and {Yoo, H.-J.}
\newblock Dirichlet forms and dirichlet operators for infinite particle
  systems: Essential self-adjointness.
\newblock {\em {Journal of Mathematical Physics.}}, 39(12):6509--6536, 1998.

\bibitem[{Del}21]{LzDS20}
{Dello Schiavo, L.}
\newblock {Ergodic Decomposition of Dirichlet Forms via Direct Integrals and
  Applications}.
\newblock {\em {Potential Anal.}}, 2021.

\bibitem[DHLM20]{DerHarLebMai20}
{Dereudre, D.}, {Hardy, A.}, {Leblé, T.}, and {Maïda, M}.
\newblock {DLR Equations and Rigidity for the Sine-Beta Process}.
\newblock {\em {Commun. Pure Appl. Math.}}, pages 172--222, 2020.

\bibitem[DS21]{LzDSSuz21}
{Dello Schiavo, L.} and {Suzuki, K.}
\newblock {Configuration spaces over singular spaces --I. Dirichlet-Form and
  Metric Measure Geometry --}.
\newblock {\em {arXiv:2109.03192v2} (version 2)}, 2021.

\bibitem[DS22]{LzDSSuz22}
{Dello Schiavo, L.} and {Suzuki, K.}
\newblock {Configuration Spaces over Singular Spaces II -- Curvature}.
\newblock {\em {arXiv:2205.01379}}, 2022.

\bibitem[Dys62]{Dys62}
F.~J.. Dyson.
\newblock A brownian-motion model for the eigenvalues of a random matrix.
\newblock {\em {J. Math. Phys}}, 3:1191--1198, 1962.

\bibitem[EH15]{ErbHue15}
{Erbar, M.} and {Huesmann, M.}
\newblock {Curvature bounds for configuration spaces}.
\newblock {\em {Calc.\ Var.}}, 54:307--430, 2015.

\bibitem[FOT11]{FukOshTak11}
{Fukushima, M.}, {Oshima, Y.}, and {Takeda, M.}
\newblock {\em {Dirichlet forms and symmetric Markov processes}}, volume~19 of
  {\em {De Gruyter Studies in Mathematics}}.
\newblock {de Gruyter}, extended edition, 2011.

\bibitem[Fre08]{Fre01}
David~Heaver Fremlin.
\newblock {\em {Measure Theory}}, volume 1--5.
\newblock Torres Fremlin, 2001--2008.

\bibitem[{Fuk}97]{Fuk97}
{Fukushima, M.}
\newblock {Distorted Brownian motions and BV functions}.
\newblock {\em {Trends in Probability and Analysis, N. Kono, N-R. Shieh, eds}},
  pages 143--150, 1997.

\bibitem[{Geo}11]{Geo11}
{Georgii, H-O.}
\newblock {\em {Gibbs measures and phase transitions}}, volume~9 of {\em
  Studies in Mathematics}.
\newblock {de Gruyter}, 2011.

\bibitem[{Gho}12]{Gho12}
{Ghosh, S.}
\newblock {Rigidity and Tolerance in Gaussian zeroes and Ginibre eigenvalues:
  quantitative estimates}.
\newblock {\em {arXiv:1211.3506}}, 2012.

\bibitem[{Gho}15]{Gho15}
{Ghosh, S.}
\newblock {Determinantal processes and completeness of random exponentials: the
  critical case}.
\newblock {\em {Probab.\ Theory Relat.\ Fields}}, 163(3):643--665, 2015.

\bibitem[GL17]{GhoLeb17}
{Ghosh, S.} and {Lebowitz, J.L.}
\newblock {Fluctuations, large deviations and rigidity in hyperuniform systems:
  a brief survey}.
\newblock {\em {Indian J. Pure Appl.~ Math.~}}, 48:609--631, 2017.

\bibitem[GP17]{GhoPer17}
{Ghosh, S.} and {Peres, Y.}
\newblock {Rigidity and tolerance in point processes: Gaussian zeros and
  Ginibre eigenvalues}.
\newblock {\em {Duke Math. J.}}, 166 (10):1789--1858, 2017.

\bibitem[HO15]{HonOsa15}
{Honda R.} and {Osada, S.}
\newblock {Infinite-dimensional stochastic differential equations related to
  Bessel random point fields}.
\newblock {\em {Stoc.\ Proc.\ Appl.}}, 125:3801--3822, 2015.

\bibitem[HR03]{HinRam03}
{Hino, M.} and {Ram{\'{i}}rez, J. A.}
\newblock {Small-Time Gaussian Behavior of Symmetric Diffusion Semigroups}.
\newblock {\em {Ann. Probab.}}, 31(3):1254--1295, 2003.

\bibitem[{Kec}95]{Kec95}
{Kechris, A.~S.}
\newblock {\em {Classical Descriptive Set Theory}}, volume 156 of {\em
  {Graduate Texts in Mathematics}}.
\newblock {Springer-Verlag}, {New York}, {1995}.

\bibitem[KT10]{KatTan10}
{Katori, M.} and {Tanemura, H.}
\newblock {Non-equilibrium dynamics of Dyson’s model with an infinite number
  of particles}.
\newblock {\em {Comm.~Math.~Phys.}}, 293(2):469--497, 2010.

\bibitem[{Lyo}03]{Lyo03}
{Lyons, R.}
\newblock {Determinantal probability measures}.
\newblock {\em {Publ.\ Math.\ Inst.\ Hautes Etudes Sci.}}, 98:167--212, 2003.

\bibitem[{Lyo}18]{Lyo18}
{Lyons, R.}
\newblock {A note on tail triviality for determinantal point processes}.
\newblock {\em {Electron.\ Commun.\ Probab.}}, 23:1--3, 2018.

\bibitem[MR85]{RoeWie85}
{Ma, Z.-M.} and {R\"ockner, M.}
\newblock {Dirichlet forms-closability and change of speed measure, Infinite
  dimensional analysis and stochastic processes}.
\newblock {\em {Research Notes in Math. S. Albeverio, ed., Pitman}},
  124:119--144, 1985.

\bibitem[MR90]{MaRoe90}
{Ma, Z.-M.} and {R\"ockner, M.}
\newblock {\em Introduction to the Theory of (Non-Symmetric) Dirichlet Forms}.
\newblock Springer, 1990.

\bibitem[MR00]{MaRoe00}
{Ma, Z.-M.} and {R{\"{o}}ckner, M.}
\newblock {Construction of Diffusions on Configuration Spaces}.
\newblock {\em {Osaka J.~Math.}}, 37:273--314, 2000.

\bibitem[NF98]{NagFor98}
{Nagao, T} and {Forrester, P. J.}
\newblock Multilevel dynamical correlation functions for dyson's brownian
  motion model of random matrices.
\newblock {\em {Physics Letters A}}, 247:801--850, 1998.

\bibitem[OO18]{OsaOsa18}
{Osada, H.} and {Osada, S.}
\newblock {Discrete approximations of determinantal point processes on
  continuous spaces: Tree representations and tail triviality}.
\newblock {\em {J.\ Stat.\ Phys.\ }}, 170:421--435, 2018.

\bibitem[{Osa}96]{Osa96}
{Osada, H.}
\newblock {Dirichlet Form Approach to Infinite-Dimensional Wiener Processes
  with Singular Interactions}.
\newblock {\em {Comm.\ Math.\ Phys.}}, 176:117--131, 1996.

\bibitem[{Osa}98]{Osa98}
{Osada, H.}
\newblock {Interacting Brownian motions with measurable potentials}.
\newblock {\em {Proc.\ Japan Acad.}}, {74(A)}:10--12, 1998.

\bibitem[{Osa}13]{Osa13}
{Osada, H.}
\newblock {Interacting Brownian Motions in Infinite Dimensions with Logarithmic
  Interaction Potentials}.
\newblock {\em {Ann.\ Probab.}}, 41(1):1--49, 2013.

\bibitem[{Osa}21]{Osa19}
{Osada, H.}
\newblock {Stochastic Analysis of Infinite Particle Systems -- A New
  Development in Classical Stochastic Analysis and Dynamical Universality of
  Random Matrices}.
\newblock {\em Sugaku Expositions}, 34:141--173, 2021.

\bibitem[OT14]{OsaTan14}
{Osada, H.} and {Tanemura, H.}
\newblock {Infinite-dimensional stochastic differential equations arising from
  Airy random point fields}.
\newblock {\em {arXiv:1408.0632v6}}, 2014.

\bibitem[OT20]{OsaTan20}
{Osada, H.} and {Tanemura, H.}
\newblock {Infinite-dimensional stochastic differential equations and tail
  $\sigma$-fields}.
\newblock {\em {Probab.\ Theory Relat.\ Fields}}, 177:1137--1242, 2020.

\bibitem[OT21]{OsaTub21}
{Osada, H.} and {Tsuboi, R.}
\newblock {Dyson's model in infinite dimensions is irreducible}.
\newblock {\em {arXiv:2107.10775}}, 2021.

\bibitem[RS80]{ReeSim80}
{Reed, M.} and {Simon, B.}
\newblock {\em Methods of Modern Mathematical Physics I -- Functional
  Analysis}.
\newblock Academic Press, New York, London, 1980.

\bibitem[RS99]{RoeSch99}
{R{\"o}ckner, M.} and {Schied, A.}
\newblock {Rademacher's Theorem on Configuration Spaces and Applications}.
\newblock {\em {J.\ Funct.\ Anal.}}, 169(2):325--356, 1999.

\bibitem[{Rue}70]{Rue70}
{Ruelle, D.}
\newblock {Superstable Interactions in Classical Statistical Mechanics}.
\newblock {\em {Comm.\ Math.\ Phys.}}, 18:127--159, 1970.

\bibitem[{Rö}09]{Roe09}
{Röckner, M.}
\newblock {Stochastic analysis on configuration spaces: basic ideas and recent
  results}.
\newblock {\em {arXiv: math/9803162}}, 2009.

\bibitem[{Sav}19]{Sav19}
{Savar{\'{e}}, G.}
\newblock {Sobolev spaces in extended metric-measure spaces.}
\newblock {\em {arXiv:1911.04321v1}}, (4), 2019.

\bibitem[{Spo}87]{Spo87}
{Spohn, H.}
\newblock {Interacting Brownian Particles: A Study of Dyson’s Model}.
\newblock {\em {Hydrodynamic Behavior and Interacting Particle Systems}}, pages
  151--179, 1987.

\bibitem[ST03]{ShiTak03b}
{Shirai, T.} and {Takahashi, Y.}
\newblock {Random point fields associated with certain Fredholm determinants
  II: Fermion shifts and their ergodic and Gibbs properties}.
\newblock {\em {Ann. Probab.}}, 31(3):1533--1564, 2003.

\bibitem[{Suz}22]{Suz22b}
{Suzuki, K.}
\newblock {Curvature Bound of Dyson Brownian Motion}.
\newblock {\em {arXiv:2301.00262}}, 2022.

\bibitem[{Tsa}16]{Tsa16}
{Tsai, L.-C.}
\newblock Infinite dimensional stochastic differential equations for dyson’s
  model.
\newblock {\em {Probability Theory and Related Fields}}, 166:801--850, 2016.

\bibitem[{Yoo}05]{Yoo05}
{Yoo, H. J.}
\newblock {Dirichlet forms and diffusion processes for fermion random point
  fields}.
\newblock {\em {J.\ Funct.\ Anal.}}, 219:143--160, 2005.

\bibitem[{Yos}96]{Yos96}
{Yoshida, M.~W.}
\newblock {Construction of infinite dimensional interacting diffusion processes
  through Dirichlet forms}.
\newblock {\em {Probab.\ Theory Relat.\ Fields}}, 106:265--297, 1996.

\bibitem[Zha01]{Zha01}
T.~S. Zhang.
\newblock {On the small time large deviations of diffusion processes on
  configuration spaces}.
\newblock {\em {Stoch.\ Proc.\ Appl.}}, 91(2):239--254, 2001.

\end{thebibliography}

\end{document}